\documentclass[]{interact}

\usepackage{epstopdf}
\usepackage[caption=false]{subfig}

\usepackage[]{todonotes}

\usepackage{tikz}
\usetikzlibrary{calc}
\usetikzlibrary{arrows,arrows.meta,intersections,shapes.geometric,positioning,calc,patterns}

\usepackage{paralist}
\usepackage[shortlabels]{enumitem}

\usepackage{hyperref}
\usepackage{algorithmic}
\usepackage[ruled,linesnumbered,vlined]{algorithm2e}

\usepackage{resizegather}

\usepackage[numbers,sort&compress]{natbib}
\bibpunct[, ]{[}{]}{,}{n}{,}{,}
\renewcommand\bibfont{\fontsize{10}{12}\selectfont}

\theoremstyle{plain}
\newtheorem{theorem}{Theorem}[section]
\newtheorem{lemma}[theorem]{Lemma}

\newtheorem{proposition}[theorem]{Proposition}
\newtheorem{assumption}{Assumption}

\theoremstyle{definition}
\newtheorem{definition}[theorem]{Definition}
\newtheorem{example}[theorem]{Example}

\theoremstyle{remark}
\newtheorem{remark}{Remark}

\DeclareMathOperator*{\argmin}{\arg\min}

\begin{document}

\articletype{ARTICLE}

\title{A heavy-ball type curve search method for smooth convexly constrained optimization}

\author{
\name{Federica Donnini\textsuperscript{a} and Pierluigi Mansueto\textsuperscript{a}\thanks{CONTACT Pierluigi Mansueto. Email: pierluigi.mansueto@unifi.it}}
\affil{\textsuperscript{a} Department of Information Engineering, University of Florence, Via di Santa Marta 3, 50139, Florence, Italy}
}

\maketitle

\begin{abstract}
This paper addresses smooth convexly constrained optimization problems where the Euclidean projection onto the feasible set is computationally tractable. Although momentum techniques like Polyak's heavy-ball are known for accelerating optimization algorithms, their use in constrained settings remains limited due to challenges in preserving feasibility and ensuring convergence. We thus propose a heavy-ball-type method that extends to the constrained case a recently introduced curve-search globalization framework. The method attempts a momentum update and performs a curvilinear search to enforce an Armijo-type descent condition: when the momentum step is infeasible or unacceptable, the algorithm smoothly reverts to a feasible descent direction. We prove that the algorithm is well-defined and globally convergent to stationary points; the derivation of these results is nontrivial due to the use of a heavy-ball type direction in a constrained setting, where it may generate infeasible iterates. We discuss the incorporation of further mechanisms into the algorithm, including non-monotone curve search, spectral steplength selection and an adaptive momentum strategy. Numerical experiments on benchmark problems show the method is robust and competitive with the state-of-the-art.
\end{abstract}

\begin{keywords}
Curve search; Heavy-ball method; Constrained optimization; Global convergence
\end{keywords}

\begin{amscode}
	90C26; 90C30
\end{amscode}

\section{Introduction}
\label{sec::introduction}

In this paper, we focus on solving convexly constrained problems of the form
\begin{equation}
    \label{eq::con-min}
    \begin{aligned}
        \min \ & f(x)\\
        \text{s.t. }& g(x)\leq\mathbf{0}_m,
    \end{aligned}
\end{equation}
 where $f:\mathbb{R}^n\to\mathbb{R}$ is a continuously differentiable function and $g:\mathbb{R}^n\to\mathbb{R}^m$ is a vector-valued continuously differentiable convex function defining the feasible set $\Omega = \{x \in \mathbb{R}^n \mid g(x) \le \mathbf{0}_m\}$, with $\mathbf{0}_m$ being a vector of all zeros of size $m$. In the following, we use the notation $[m] = \{1,\ldots, m\}$. Given a subset of constraints indices $I \subseteq [m]$, we define $\Omega_I = \{x \in \mathbb{R}^n\mid g_i(x) \le 0,\ \forall i \in I\}$ to denote the convex feasible set associated with $I$; clearly, $\Omega \subseteq \Omega_{I}$. Finally, we denote by $\Pi_{\Omega}$ the Euclidean projection onto the convex set $\Omega$, which we assume to be computationally tractable.

Classical approaches for solving problems of the form \eqref{eq::con-min} are well established (see, e.g., \cite{bertsekas1999nonlinear,grippo2023introduction}). Starting from a feasible point, methods such as the \textit{projected gradient method} and the \textit{Frank--Wolfe method} iteratively compute feasible descent directions and perform a line search to determine an appropriate steplength. These directions are obtained either by applying the projection operator onto $\Omega$ or by solving an auxiliary smooth  linear optimization subproblem, thereby avoiding explicit projections.
In particular, the work in \cite{birgin2000nonmonotone} introduces \texttt{SPG}, a projected gradient method in which the classical scheme is integrated by a spectral gradient choice of the direction to be projected, and a non-monotone line search based on quadratic interpolation is introduced.
These mechanisms significantly enhance the practical effectiveness and efficiency of the projected gradient approach, making \texttt{SPG} one of the state-of-the-art methods in the considered convexly constrained scenario.

In this paper, we focus on employing accelerated methodologies within a convexly constrained setting. In particular, we consider the well-known Polyak's heavy-ball method \cite{polyak1964some, polyak1987introduction}, whose standard update rule is given by
\begin{equation*}
x_{k+1} = x_k - \alpha \nabla f(x_k) + \beta (x_k - x_{k-1}),
\end{equation*}
where $\alpha, \beta \in \mathbb{R}$ are typically fixed positive parameters. Incorporating information from previous steps when computing the current update helps to mitigate aggressive oscillations in regions of high curvature while providing acceleration in regions of low curvature. This characteristic has made Polyak’s heavy-ball method a milestone in nonlinear unconstrained optimization, especially in stochastic finite-sum problems common in machine learning applications \cite{bottou18, wright2022optimization}.

While Polyak’s heavy-ball method is renowned for accelerating convergence in convex settings, its convergence critically depends on regularity assumptions and/or fixed parameters $(\alpha, \beta)$ that often fail in non-convex scenarios (see, e.g., \cite[Section 4.6]{lessard2016analysis}). In the absence of convexity, constant stepsizes cannot guarantee global convergence. Moreover, most existing works addressing this issue either introduce safeguards concerning the descent property of the heavy-ball direction \cite{fan2023msl} or develop methods that are closely related to conjugate gradient approaches (see, e.g., \cite{lapucci2024globallyconvergentgradientmethod,Lee17,Liu2024,Powell1977}). To address these limitations, the work in \cite{donnini2025efficientglobalizationheavyballtype} proposes a novel advanced globalization strategy based on a curve search approach \cite{ben1990curved,botsaris1978differential,goldfarb1980curvilinear,gould2000exploiting,shi2005new,xu2016global}. By performing backtracking along suitable search curves to satisfy a (possibly non-monotone) Armijo-type condition, this framework ensures global convergence for iterative methods in non-convex settings. A key feature of this technique, often lost in other globalization strategies, is the ability to recover the behavior of the ``pure'' heavy-ball method in strongly convex cases.

Here, we propose an extension of the curve search method introduced in \cite{donnini2025efficientglobalizationheavyballtype} 
to the constrained context of problem \eqref{eq::con-min}. By leveraging curvilinear searches, we can employ 
Polyak's heavy-ball direction in settings that go beyond the unconstrained case, 
reverting to a feasible descent direction via an Armijo-type curve search whenever the update is not acceptable 
(e.g., due to infeasibility or failure to satisfy an Armijo-type condition). 

There are several motivations behind this study: (i) momentum methods for constrained problems remain relatively underexplored, despite their potential benefits; (ii) a natural extension of Polyak's heavy-ball scheme takes the form
\begin{equation*}
    x_{k+1} = \Pi_{\Omega}[x_k - \alpha \nabla f(x_k) + \beta ( x_k - x_{k-1})],
\end{equation*}
which has been studied in \cite{Tao22} in the context of non-smooth convex constrained problems; in this work, however, we focus on a different setting, where the smooth objective function may be nonlinear and, most importantly, nonconvex; (iii) our goal is to propose a momentum-based methodology that does not require significant additional computational effort; indeed, a possible alternative would be to perform a second projection to project the momentum direction $x_k - x_{k-1}$ onto the feasible set; however, avoiding such additional projections is particularly important in scenarios where computing the projection may incur a non-negligible cost.

The novel curve search algorithm is analyzed from a theoretical point of view, proving both its well-definedness and global convergence to stationary points for problem \eqref{eq::con-min}. We emphasize that, although these results are analogous to those obtained in the unconstrained case, they are far from trivial to establish, as we had to account for the effect of the momentum direction in a constrained setting, where it could potentially lead to infeasible iterates. 

The remainder of the paper is organized as follows. In Section \ref{sec::preliminaries}, we review the preliminary notions relevant to this work, including a description of the curve search method introduced in \cite{donnini2025efficientglobalizationheavyballtype} for the unconstrained case. In Section \ref{sec::proposals}, we present our extension of the curve search approach to the constrained setting, including the general scheme of the proposed method and its theoretical analysis. In Section \ref{sec::extensions}, we discuss several mechanisms that can be incorporated into the algorithm, such as a non-monotone line search, a spectral gradient choice of the directions defining the curve, and an adaptive version of the momentum direction suitable for constrained problems. In Section \ref{sec::experiments}, we report numerical experiments on a benchmark of problems with different types of feasible sets, highlighting the consistency and effectiveness of our approach by comparison with the state-of-the-art \texttt{SPG} algorithm. Finally, Section \ref{sec::conclusions} provides some concluding remarks.

\section{Preliminaries}
\label{sec::preliminaries}

In this section, we recall the definitions of the main concepts that will be used throughout the paper. We begin by defining the feasible directions at a point $\bar{x} \in \Omega$.

\begin{definition}
    \label{def::feasible_direction}
    Let $\bar{x} \in \Omega$. A vector $d \in \mathbb{R}^n$ is called a \textit{feasible direction} for $\Omega$ at $\bar{x}$ if there exists $\bar{\alpha} > 0$ such that
    \begin{equation*}
        \bar{x} + \alpha d \in \Omega, \quad \forall \alpha \in [0, \bar{\alpha}].
    \end{equation*}
\end{definition}

The notion of feasible directions allows us to define stationary points for constrained optimization problems.

\begin{definition}
    \label{def::stat}
    A point $\bar{x} \in \Omega$ is a \emph{stationary point} for problem \eqref{eq::con-min} if $$\nabla f(\bar{x})^\top(x-\bar{x}) \ge 0, \quad \forall x \in \Omega.$$
\end{definition}

We call \emph{feasible direction method} any algorithm that starts from a feasible point $x_0 \in \Omega$ and generates a sequence of feasible points $\{x_k\}$ according to 
\begin{equation*}
    x_{k+1}=x_k+\alpha_kd_k,    
\end{equation*}
where, if $x_k$ is not stationary, $d_k$ is a feasible \emph{descent} direction at $x_k$, i.e., $\nabla f(x_k)^\top d_k<0$, and the stepsize $\alpha_k>0$ is such that the feasibility and Armijo-type sufficient decrease conditions are satisfied in the new iterate: 
\begin{equation}
    \label{eq::armijo}
    \quad\quad
    \begin{aligned}
        \alpha_k & = \max_{h \in \mathbb{N}}\left\{\alpha_0\delta^h
        \middle\vert
        \begin{aligned}
            & x_k+\delta^h\alpha_0d_k\in\Omega\ \land\\
            &f(x_k+\delta^h\alpha_0d_k) \le f(x_k) + \sigma\delta^h\alpha_0\nabla f(x_k)^\top d_k
        \end{aligned}
        \right\}
    \end{aligned}
\end{equation}
with $\alpha_0 > 0, \delta \in (0, 1), \sigma \in (0, 1)$. The method stops whenever the current iterate $x_k$ is stationary.

In particular, we are interested in feasible direction methods where the search direction is not only a feasible descent direction, but it is also gradient-related, according to the following definition.

\begin{definition}[{\cite[Section 2.2.1]{bertsekas1999nonlinear}}]
\label{def:gr}
Let $\{x_k\}$ and $\{d_k\}$ be two sequences generated by a feasible direction method where $x_k \in \Omega$ and $d_k \in \mathbb{R}^n$ is a feasible descent direction at $x_k$, i.e., $\nabla f(x_k)^\top d_k < 0$. We say that the direction sequence $\{d_k\}$ is \textit{gradient-related} to $\{x_k\}$ if, for any subsequence $\{x_k\}_{k \in K}$ that converges to
a nonstationary point, the corresponding subsequence $\{d_k\}_{k \in K}$ is bounded and satisfies
$$\limsup_{k\to\infty, k \in K}\nabla f(x_k)^\top d_k < 0.$$
\end{definition}

We should note that many standard approaches for smooth constrained optimization problems actually rely on directions of this type:
\begin{itemize}
    \item projected gradient method \cite[Section 2.3]{bertsekas1999nonlinear}:
    \begin{equation*}
        d_k = \Pi_{\Omega}[x_k - \eta\nabla f(x_k)] - x_k,\text{ with }\eta > 0;
    \end{equation*}
    \item Frank-Wolfe method \cite[Section 2.2.2]{bertsekas1999nonlinear}:
    \begin{equation*}
        d_k = z_k - x_k\text{, with }z_k \in \argmin_{z \in \Omega}\nabla f(x_k)^\top(z-x_k);
    \end{equation*}
    note that, an additional assumption on the compactness of $\Omega$ is needed to guarantee the existence of a solution for the auxiliary problem;
    \item non-monotone spectral projected gradient method \cite{birgin2000nonmonotone,birgin01}, denoted as \texttt{SPG}:
    \begin{equation}
        \label{eq::d_SPG}
        d_k=\Pi_\Omega[x_k-\eta_k\nabla f(x_k)]-x_k,
    \end{equation}
    where the safeguarded ``inverse Rayleigh quotient'' $\eta_k$ is defined as $\eta_0 > 0$ and, for $k>0$,
    \begin{equation*}
    \eta_k=\min\left\{\eta_{\max},\max\left\{\eta_{\min},
    \frac{r_{k-1}^\top r_{k-1}}{r_{k-1}^\top y_{k-1}}\right\}\right\},
    \end{equation*}
    with $\eta_{\max} > \eta_{\min} > 0$, $r_{k-1}=x_k-x_{k-1}$, and $y_{k-1}=\nabla f(x_k)-\nabla f(x_{k-1})$. Note that \texttt{SPG} employs a non-monotone line search based on a safeguarded quadratic interpolation; we refer the reader to the cited references for further details. By incorporating these modifications into the classical projected gradient method, \texttt{SPG} is widely regarded as a state-of-the-art approach for smooth, convexly constrained optimization problems in which the Euclidean projection is computationally tractable.
\end{itemize}
We can easily verify that these directions are feasible directions for $\Omega$ at $x_k \in \Omega$ according to Definition~\ref{def::feasible_direction} with $\bar{\alpha} = 1$. Thus, note that, if $\alpha_0 = 1$ in the Armijo-type line search \eqref{eq::armijo}, the feasibility condition is trivially satisfied.

\subsection{The Curve Search Method for Unconstrained Optimization}

We now recall the concepts needed to define curve search methods. In particular, we consider the curve search method introduced for the unconstrained case in \cite{donnini2025efficientglobalizationheavyballtype}, focusing specifically on quadratic curves.

A curve search method iteratively defines the new iterate according to 
\begin{equation}
    \label{eq::curve_search}
    x_{k+1} = \gamma_k(t_k),
\end{equation}
where $\gamma_k: [0, 1] \to \mathbb{R}^n$ is a differentiable curve and $t_k>0$ is a parameter that determines how far to move along the curve. Note that, for a line-search type algorithm, $\gamma_k$ would define a straight line.
Representatives for this type of methods can be found in the literature (see, e.g., \cite{ben1990curved, xu2016global}).
In the methodology proposed in \cite{donnini2025efficientglobalizationheavyballtype}, search curves are parametrized as follows:
\begin{equation*}
    \gamma_k(t) = \gamma(t; x_k, d_k, \xi_k),
\end{equation*}
where $x_k \in \mathbb{R}^n$ is the current solution, $d_k \in \mathbb{R}^n$ is a search direction suitably related to the gradient at $x_k$, and $\xi_k \in \mathbb{R}^p$ represents an additional parameter, which, for instance, may denote a second search direction. Similarly to \cite{donnini2025efficientglobalizationheavyballtype}, the search curves must satisfy the following assumption.

\begin{assumption}[{\cite[Assumption 1]{donnini2025efficientglobalizationheavyballtype}}]\label{ass::defcurve}
    These conditions hold for the family of search curves $\gamma(\cdot;x,d,\xi)$: 
    \begin{enumerate}
        \item[(a)] $\gamma(t;x,d,\xi)$ is continuous w.r.t.\ $t,x,d$ and $\xi$;
        \item[(b)] $\gamma(0;x,d,\xi)=x$, i.e., the starting point of the curve is $x$;
        \item[(c)] the velocity $$\gamma'(t;x,d,\xi) = \left(\frac{\partial \gamma^1(t;x,d,\xi)}{\partial t},\ldots,\frac{\partial \gamma^n(t;x,d,\xi)}{\partial t}\right)^\top$$ exists and is continuous for all $t,x,d$ and $\xi$;
        \item[(d)] $\gamma'(0;x,d,\xi)=d$, i.e., the initial velocity of the curve is given by direction $d$.
    \end{enumerate}
\end{assumption}

The definition of descent directions can thus be generalized to descent curves.
\begin{definition}[{\cite[Definition~1]{donnini2025efficientglobalizationheavyballtype}}]
    \label{def::descent_curves}
    Let $x_k \in \mathbb{R}^n$ and $\gamma_k$ be a differentiable curve satisfying Assumption \ref{ass::defcurve}.
    We say that $\gamma_k$ is a \textit{descent curve} for $f$ at $x_k$ if $\gamma_k'(0)$ is a descent direction for $f$ at $x_k$, that is, if $\nabla f(x_k)^\top \gamma_k'(0) < 0$.
\end{definition}

Given a descent curve $\gamma_k$, we rely on the Armijo-type curve search proposed in \cite{xu2016global,donnini2025efficientglobalizationheavyballtype} to choose the stepsize $t_k$ in \eqref{eq::curve_search}:
\begin{equation}
    \label{eq::armijo-curve}
    t_k = \max_{h \in \mathbb{N}}\{\delta^h \mid f(\gamma_k(\delta^h)) \le f(x_k) + \sigma \delta^h \nabla f(x_k)^\top d_k\}.
\end{equation}
Under suitable assumptions, the curve search method is proven to converge to stationary points.

Although the method proposed in \cite{donnini2025efficientglobalizationheavyballtype} leaves some freedom in the definition of the curves, the main focus is on a specific class of curves that satisfy Assumption \ref{ass::defcurve}, namely polynomial curves of degree two:
\begin{equation}
    \label{eq::quadratic_gamma}
    \gamma_k(t)=\gamma(t;x_k,d_k,s_k)= x_k + t d_k + t^2 (s_k - d_k),
\end{equation}
where the primary direction $d_k$ is gradient-related and $\xi_k$ is replaced by a direction $s_k \in \mathbb{R}^n$, which may not be a descent direction and along which standard line-search techniques may be insufficient to guarantee convergence (e.g., Newton-type, heavy-ball or other momentum directions).
These curves satisfy Assumption \ref{ass::defcurve}. In particular, $\gamma(t;x_k,d_k,s_k)$ is continuous with respect to $t$, $x_k$, $d_k$ and $s_k$; $\gamma(0;x_k,d_k,s_k)=\gamma_k(0)=x_k$; the velocity $\gamma'(t;x_k,d_k,s_k)$ exists and is continuous with respect to $t$, $x_k$, $d_k$ and $s_k$; finally, the initial velocity is given by $\gamma'(0;x_k,d_k,s_k)=\gamma_k'(0)=d_k$.
Particular interest lies in choosing the heavy-ball direction as the second direction, i.e.,
\begin{equation*}
    s_k = -\alpha \nabla f(x_k) + \beta (x_k - x_{k-1}),
\end{equation*}
which makes the approach proposed in \cite{donnini2025efficientglobalizationheavyballtype} an effective and efficient globalization strategy for heavy-ball-type methods, capable of recovering the behavior of ``pure'' heavy-ball steps in the strongly convex setting.

These quadratic curves \eqref{eq::quadratic_gamma} can be rewritten in Bernstein basis to obtain Bézier curves of degree two \cite{farin2000essentials} corresponding to the points $(P_0,P_1,P_2)$:
\begin{equation}
    \label{eq::quadratic_bezier}
    \gamma_k(t) = (1-t)^2 P_0 + 2t(1-t) P_1 + t^2 P_2,
\end{equation}
with $P_0 = x_k$, $P_1 = x_k + \frac{1}{2} d_k$ and $P_2 = x_k + s_k$. We also refer the reader to Figure~1 in \cite{donnini2025efficientglobalizationheavyballtype} for a graphical illustration of this type of curve.
The Bézier formulation allows us to derive another crucial property of curves of the form \eqref{eq::quadratic_gamma}: for all $t \in [0,1]$, it holds that 
\begin{equation}
    \label{eq::gamma_CH}
    \gamma(t) \in \text{CH}(P_0,P_1,P_2),
\end{equation}
where $\text{CH}(P_0,P_1,P_2)$ denotes the convex hull of the points $(P_0,P_1,P_2)$. We conclude the section by introducing two additional properties, whose proofs are provided in Appendix~\ref{app::proof}.

\begin{lemma}
    \label{lemma::CHgamma}
    Let $\gamma:[0,1]\to\mathbb{R}^n$ be the quadratic Bézier curve corresponding to points $(P_0,P_1,P_2)$ and $\hat{t}\in[0,1]$. Then, for all $t\in[0,\hat{t}]$, $\gamma(t)\in \text{CH}(P_0,P_1,\gamma(\hat{t}))$. 
\end{lemma}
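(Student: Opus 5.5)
The plan is to use de Casteljau subdivision. The sub-arc $\{\gamma(t): t\in[0,\hat t]\}$ of a quadratic Bézier curve is itself a quadratic Bézier curve, so \eqref{eq::gamma_CH} applies to it. The case $\hat t=0$ is trivial, since then $t=0$ and $\gamma(0)=P_0$. So I assume $\hat t\in(0,1]$ and reparametrize by $t=\hat t u$ with $u\in[0,1]$, setting $\tilde\gamma(u)=\gamma(\hat t u)$. This $\tilde\gamma$ is again a polynomial of degree at most two in $u$, so it admits a Bernstein representation
\begin{equation*}
\tilde\gamma(u)=(1-u)^2Q_0+2u(1-u)Q_1+u^2Q_2 .
\end{equation*}
The control points are determined by the endpoint values and the initial derivative. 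Specifically, $Q_0=\tilde\gamma(0)=P_0$ and $Q_2=\tilde\gamma(1)=\gamma(\hat t)$. For the middle point, $2(Q_1-Q_0)=\tilde\gamma'(0)=\hat t\,\gamma'(0)=2\hat t(P_1-P_0)$, which gives $Q_1=(1-\hat t)P_0+\hat t P_1$. I would check these identifications either by expanding both sides in the monomial basis or simply by invoking uniqueness of the quadratic interpolating these three data.

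Next I apply the convex hull property \eqref{eq::gamma_CH} to $\tilde\gamma$, which gives $\gamma(t)=\tilde\gamma(t/\hat t)\in \mathrm{CH}(Q_0,Q_1,Q_2)$ for every $t\in[0,\hat t]$. It remains to note that $Q_1$ is a convex combination of $P_0$ and $P_1$, since $\hat t\in[0,1]$. Hence $\mathrm{CH}(Q_0,Q_1,Q_2)\subseteq \mathrm{CH}(P_0,P_1,\gamma(\hat t))$, which is the claim.

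No step is a real obstacle. The only point requiring care is computing $Q_1$ correctly; the derivative matching above handles it. Alternatively, one can write $\gamma(t)$ explicitly as $a P_0+bP_1+c\,\gamma(\hat t)$ with $c=t^2/\hat t^2$, $b=2t(\hat t - t)/\hat t^{2}\cdot\hat t$ (suitably simplified), and $a=1-b-c$, and then verify nonnegativity. I would present the subdivision argument because it avoids this algebra.
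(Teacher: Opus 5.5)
Your proof is correct. It reaches the result by a different route from the paper's.

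\textbf{The paper's route.} The paper eliminates $P_2$ using the Bézier identity at $\hat t$. This gives explicit weights on $P_0,P_1,\gamma(\hat t)$:
$a_0=(1-t)^2-\frac{t^2}{\hat t^2}(1-\hat t)^2$, $a_1=2t(1-t)-2\frac{t^2}{\hat t}(1-\hat t)$ and $a_2=t^2/\hat t^2$.
It then verifies by hand that $a_0,a_1\ge 0$, using the monotonicity of $(1-h)^2/h^2$ and $(1-h)/h$ on $(0,1]$, and checks $a_0+a_1+a_2=1$ algebraically.

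\textbf{Your route.} The reparametrization produces exactly the same weights. With $u=t/\hat t$,
$a_1=2u(1-u)\hat t=2t(\hat t-t)/\hat t$ and $a_0=(1-u)^2+2u(1-u)(1-\hat t)=(1-u)(1+u-2u\hat t)$,
and both agree with the paper's expressions after factoring. Your argument explains where these weights come from: they are the Bernstein weights of the rescaled arc composed with the convex weights $(1-\hat t,\hat t)$ that define $Q_1$. Nonnegativity and summing to one therefore follow from \eqref{eq::gamma_CH}, and no monotonicity check is needed.

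\textbf{Trade-offs.} You rely on two standard facts that the paper's direct computation avoids. The first is that the degree-two Bernstein polynomials form a basis. The second is that a quadratic is determined by $\tilde\gamma(0)$, $\tilde\gamma'(0)$ and $\tilde\gamma(1)$. In exchange, your argument extends verbatim to Bézier curves of any degree $p$. The left de Casteljau control points at $\hat t$ are convex combinations of $P_0,\dots,P_{p-1}$, except the last one, which is $\gamma(\hat t)$. Hence $\gamma(t)\in\mathrm{CH}(P_0,\dots,P_{p-1},\gamma(\hat t))$ for $t\in[0,\hat t]$, whereas the paper's elimination approach would become unwieldy in higher degree.
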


\begin{lemma}
    \label{lemma::CHinfeas}
    Let $\gamma:[0,1]\to\mathbb{R}^n$ be the quadratic Bézier curve corresponding to points $(P_0,P_1,P_2)$ with $P_0, P_1 \in \Omega$. If there exists $\hat{t}\in(0,1]$ such that $g_i(\gamma(\hat{t})) > 0$ for some $i \in [m]$, then $g_i(P_2) > 0$.
\end{lemma}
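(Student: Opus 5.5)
We argue by contraposition: assuming $g_i(P_2)\le 0$, we show that $g_i(\gamma(\hat t))\le 0$ for every $\hat t\in(0,1]$. The key tools are the convex hull property \eqref{eq::gamma_CH} of the quadratic Bézier curve and the convexity of $g_i$. The set $\Omega_{\{i\}}=\{x\in\mathbb{R}^n \mid g_i(x)\le 0\}$ is convex because it is a sublevel set of the convex function $g_i$. It therefore contains the convex hull of any points it contains.

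First, we check that $P_0$ and $P_1$ belong to $\Omega_{\{i\}}$. This holds because $P_0,P_1\in\Omega\subseteq\Omega_{\{i\}}$, so $g_i(P_0)\le 0$ and $g_i(P_1)\le 0$.

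Second, under the contrapositive hypothesis $g_i(P_2)\le 0$, all three control points lie in $\Omega_{\{i\}}$. Hence $\text{CH}(P_0,P_1,P_2)\subseteq\Omega_{\{i\}}$. Explicitly, any $x=\lambda_0P_0+\lambda_1P_1+\lambda_2P_2$ with $\lambda_j\ge0$ and $\sum_j\lambda_j=1$ satisfies
\begin{equation*}
g_i(x)\le\lambda_0 g_i(P_0)+\lambda_1 g_i(P_1)+\lambda_2 g_i(P_2)\le 0
\end{equation*}
by Jensen's inequality.

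Third, \eqref{eq::gamma_CH} gives $\gamma(\hat t)\in\text{CH}(P_0,P_1,P_2)$ for every $\hat t\in[0,1]$. Therefore $g_i(\gamma(\hat t))\le 0$ for all $\hat t$, which contradicts the existence of $\hat t\in(0,1]$ with $g_i(\gamma(\hat t))>0$. Hence $g_i(P_2)>0$.

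There is essentially no obstacle here. The only points to justify are the convex hull property, which comes from the nonnegative Bernstein weights $(1-t)^2$, $2t(1-t)$, $t^2$ summing to one on $[0,1]$, and the convexity of the sublevel set of $g_i$. Lemma~\ref{lemma::CHgamma} is not needed for this statement.
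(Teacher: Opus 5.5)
Your proof is correct and follows essentially the same route as the paper: assume $g_i(P_2)\le 0$, write $\gamma(\hat t)$ as a convex combination of $P_0,P_1,P_2$ with the Bernstein weights, and apply the convexity of $g_i$ (Jensen) to get $g_i(\gamma(\hat t))\le 0$, a contradiction. The paper also treats $\hat t=1$ separately (where $\gamma(1)=P_2$), which your uniform argument shows is not needed.
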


\section{A Curvilinear Descent Method for Constrained Optimization}
\label{sec::proposals}

In this section, we present an extension of the curve search method introduced in \cite{donnini2025efficientglobalizationheavyballtype} for smooth convexly constrained optimization.

We start by extending the notion of feasible directions to define feasible curves. Note that, similarly, the concept of feasible direction method can be easily extended to define feasible curve methods. Moreover, we adapt Definition~\ref{def:gr} to introduce gradient-related curves in the constrained setting.

\begin{definition}
\label{def::fc}
Let $x_k \in \Omega$ and let $\gamma_k$ be a differentiable curve satisfying Assumption~\ref{ass::defcurve}. We say that $\gamma_k$ is a \textit{feasible curve} for $\Omega$ at $x_k$ if there exists $\bar{t}\in(0,1]$ such that
\begin{equation*}
\gamma_k(t) \in \Omega, \quad \forall t \in [0, \bar{t}].
\end{equation*}
\end{definition}

\begin{definition}
\label{def::gradient-related}
Let $\{x_k\}$ and $\{\gamma_k\}$ be two sequences generated by a \textit{feasible curve method},
where $x_k \in \Omega$ and $\gamma_k$ is a feasible \textit{descent} curve at $x_k$, i.e., $\nabla f(x_k)^\top \gamma_k'(0) < 0$. We say that the curve sequence $\{\gamma_k\}$ is \textit{gradient-related} to $\{x_k\}$ if, for any subsequence $\{x_k\}_{k \in K}$ that converges to a nonstationary point, the corresponding subsequence $\{\gamma_k'(0)\}_{k \in K}$ is bounded and satisfies
\begin{equation*}
\limsup_{k\to\infty, k \in K} \nabla f(x_k)^\top \gamma_k'(0) < 0.
\end{equation*}
\end{definition}

Similarly to \cite{donnini2025efficientglobalizationheavyballtype}, we consider curves $\gamma_k(t)=\gamma(t;x_k,d_k,s_k)$ of the form \eqref{eq::quadratic_gamma}, where $d_k$ is a feasible gradient-related direction and $s_k$ is the Polyak's heavy-ball direction adapted to the constrained setting, i.e.,
\begin{equation}
\label{eq::polyak_constrained}
s_k = \alpha d_k + \beta (x_k - x_{k-1}).
\end{equation}
It is straightforward to verify that the curves $\gamma_k$ are gradient-related in the sense of Definition~\ref{def::gradient-related}.

In the following, we assume that the direction $d_k$ satisfies the following assumption.

\begin{assumption}
\label{ass::dk_feas}
The direction $d_k$ is a feasible direction according to Definition~\ref{def::feasible_direction} with $\bar{\alpha} = 1$.
\end{assumption}

Note that this assumption is reasonable in the constrained setting, as many standard directions satisfy it, as already mentioned in Section~\ref{sec::preliminaries}. This also implies that we consider quadratic curves with control points $P_0 = x_k \in \Omega$ and $P_1 = x_k + \frac{1}{2} d_k \in \Omega$.

In Figure~\ref{fig::example-cs}, we show a graphical example of a feasible curve of the form \eqref{eq::quadratic_gamma} satisfying Assumption~\ref{ass::dk_feas}. Note that we do not require $P_2 \in \Omega$ for the curve to be feasible. If $P_2 \in \Omega$, then by equation \eqref{eq::gamma_CH} and convexity of $\Omega$ every point of the curve is feasible, i.e., the curve is feasible according to Definition~\ref{def::fc} with $\bar{t} = 1$.

\begin{figure}[ht]
\centering
\includegraphics[width=0.6\textwidth]{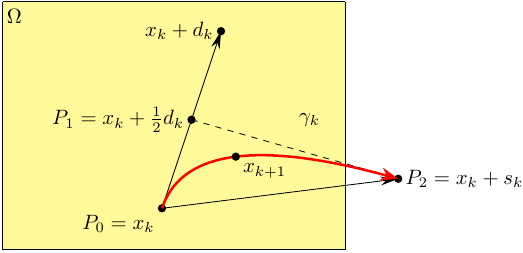}
\caption{Feasible curve $\gamma_k$ of the form \eqref{eq::quadratic_gamma} satisfying Assumption~\ref{ass::dk_feas}. The convex feasible set $\Omega$ is highlighted in yellow.}
\label{fig::example-cs}
\end{figure}

In general, a quadratic curve of the form \eqref{eq::quadratic_gamma} must satisfy additional reasonable conditions in order to be feasible. These conditions are formalized in the following proposition.

\begin{proposition}
\label{prop::feasibility}
Let $x_k \in \Omega$ and $\gamma_k$ be a gradient-related curve of the form \eqref{eq::quadratic_gamma} satisfying Assumptions~\ref{ass::defcurve}-\ref{ass::dk_feas}. Let $\tilde{t} \in (0,1)$, $\tilde{x}_k = x_k + \tilde{t} d_k$ and $\tilde{I}_k = \{i \in [m] \mid g_i(\tilde{x}_k) = 0\}$ denote the set of active constraints at $\tilde{x}_k$. If $g_i(x_k + s_k) \le 0$ for all $i \in \tilde{I}_k$, then $\gamma_k$ is a feasible curve for $\Omega$ at $x_k$.
\end{proposition}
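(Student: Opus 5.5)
The plan is to write each point of the curve, for small $t$, as an explicit convex combination of three points: $x_k$, $\tilde{x}_k$ and $x_k+s_k$. Then convexity of each $g_i$ bounds $g_i(\gamma_k(t))$ by the same combination of the values $g_i(x_k)$, $g_i(\tilde{x}_k)$, $g_i(x_k+s_k)$. The point $\tilde{x}_k=x_k+\tilde t d_k$ replaces the control point $P_1$ of \eqref{eq::quadratic_bezier}. This is essentially the mechanism behind \eqref{eq::gamma_CH} and Lemma~\ref{lemma::CHgamma}, but with weights chosen so that the active index set $\tilde I_k$ appears.

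\textbf{Step 1: the decomposition.} We seek weights $a,b\ge 0$ with $a+b\le 1$ such that
\begin{equation*}
\gamma_k(t)=(1-a-b)\,x_k + a\,\tilde{x}_k + b\,(x_k+s_k).
\end{equation*}
Subtracting $x_k$ from both sides, this amounts to $a\tilde t d_k + b s_k = t d_k + t^2(s_k-d_k)$. Matching the coefficients of $s_k$ and $d_k$ gives
\begin{equation*}
b=t^2,\qquad a=\frac{t(1-t)}{\tilde t}.
\end{equation*}
Both weights are nonnegative for $t\in[0,1]$. Moreover $a+b = t(1-t)/\tilde t + t^2\le 1$ for all $t\in[0,t_0]$ with some $t_0>0$, since the left-hand side tends to $0$ as $t\to 0$.

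We also record the feasibility facts needed. We have $x_k\in\Omega$ by hypothesis. By Assumption~\ref{ass::dk_feas} and $\tilde t\in(0,1)$, also $\tilde{x}_k\in\Omega$. Hence $g_i(\tilde{x}_k)=0$ for $i\in\tilde I_k$, and $g_i(\tilde{x}_k)<0$ for $i\notin\tilde I_k$.

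\textbf{Step 2: the active constraints.} For $i\in\tilde I_k$ and $t\in[0,t_0]$, convexity of $g_i$ gives
\begin{equation*}
g_i(\gamma_k(t))\le (1-a-b)g_i(x_k)+a\cdot 0 + t^2 g_i(x_k+s_k)\le 0,
\end{equation*}
because $g_i(x_k)\le 0$ and, by hypothesis, $g_i(x_k+s_k)\le 0$.

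\textbf{Step 3: the inactive constraints.} For $i\notin\tilde I_k$, no sign information is available on $g_i(x_k+s_k)$, so the point $x_k+s_k$ may well violate these constraints. I expect this to be the main obstacle. It is resolved by the different orders in $t$ of the two weights: dropping the nonpositive term $(1-a-b)g_i(x_k)$, convexity yields
\begin{equation*}
g_i(\gamma_k(t))\le t\left[\frac{(1-t)}{\tilde t}\,g_i(\tilde{x}_k)+t\,g_i(x_k+s_k)\right].
\end{equation*}
As $t\to 0^+$ the bracket tends to $g_i(\tilde{x}_k)/\tilde t<0$. Hence for each such $i$ there is $t_i>0$ with $g_i(\gamma_k(t))\le 0$ on $[0,t_i]$.

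\textbf{Conclusion.} Since there are finitely many constraints, the choice
\begin{equation*}
\bar t=\min\Bigl\{t_0,\ \min_{i\notin\tilde I_k}t_i,\ 1\Bigr\}>0
\end{equation*}
gives $\gamma_k(t)\in\Omega$ for all $t\in[0,\bar t]$, which is Definition~\ref{def::fc}.
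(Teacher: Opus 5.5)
Your proof is correct, and it takes a genuinely different route from the paper's.

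The paper keeps the Bézier control points $P_0=x_k$, $P_1=x_k+\frac{1}{2}d_k$, $P_2=x_k+s_k$ and argues by cases. If $P_2\in\Omega$, it concludes from \eqref{eq::gamma_CH}. Otherwise it splits $[m]$ into three groups:
\begin{itemize}
\item the constraints inactive at $x_k$, handled by the convex hull when $g_i(P_2)\le 0$ and by sign preservation of $g_i(\gamma_k(\cdot))$ when $g_i(P_2)>0$;
\item the set $\tilde I_k$, handled by the convex hull plus the hypothesis;
\item the set $I_k\setminus\tilde I_k$, where a contradiction argument on one-sided derivatives gives $\nabla g_{i^\star}(x_k)^\top d_k\ge 0$, which is incompatible with the gradient inequality at $\tilde x_k$; Lemma~\ref{lemma::CHgamma} then extends feasibility from the single point $\gamma_k(\bar t)$ to $[0,\bar t]$.
\end{itemize}

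You instead replace the middle control point by $\tilde x_k$ itself. For $\tilde t=\frac{1}{2}$ your weights are exactly the Bernstein weights of \eqref{eq::quadratic_bezier}. This makes the index set in the hypothesis the natural one. The different orders of the weights, $a=t(1-t)/\tilde t$ of order $t$ against $b=t^2$, then handle every constraint outside $\tilde I_k$ at once, whether or not it is active at $x_k$.

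This buys a single argument with no case split on $P_2$, no contradiction, no appeal to Lemma~\ref{lemma::CHgamma}, and no use of differentiability of $g$; convexity of each $g_i$ suffices. It is also quantitative. Since $a+b=t/\tilde t-t^2(1/\tilde t-1)\le t/\tilde t$, you may take $t_0=\tilde t$. For $i\notin\tilde I_k$ with $g_i(x_k+s_k)>0$, the bracket in Step 3 is nonpositive exactly when $t\le |g_i(\tilde x_k)|/\bigl(|g_i(\tilde x_k)|+\tilde t\,g_i(x_k+s_k)\bigr)$. You therefore get an explicit lower bound on $\bar t$ in terms of $\tilde t$, the slacks at $\tilde x_k$ and the violations at $x_k+s_k$, whereas the paper's proof only gives existence. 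The paper's route has the compensating advantage of staying within the Bézier convex-hull tools it reuses in Lemma~\ref{lemma::CHinfeas} and the convergence proof.

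One minor point: Step 3 relies on $a+b\le 1$, so each $t_i$ should be taken $\le t_0$. Your final minimum already takes care of this.
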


\begin{proof}
    First, we consider the case $P_2 = x_k + s_k \in \Omega$. Given that $x_k \in \Omega$, $\Omega$ is convex, Assumption \ref{ass::dk_feas} and equations \eqref{eq::quadratic_bezier}-\eqref{eq::gamma_CH} hold, it follows that $\gamma_k(t) \in \text{CH}(P_0, P_1, P_2) \subseteq \Omega$ for all $t \in [0, 1]$. Consequently, we have that the curve is feasible according to Definition \ref{def::fc} with $\bar{t} = 1$.

    We now consider the case $P_2 = x_k+s_k\not\in\Omega$. Let us define the composite function $h^k: \mathbb{R}\to\mathbb{R}^m$ as $h^k(t)=g(\gamma_k(t))$; since $g$ and $\gamma_k$ (see Assumption \ref{ass::defcurve}) are continuously differentiable,  $h^k$ is also continuously differentiable.
    Furthermore, let us define the set 
    \begin{equation*}
        I_k = \{i \in [m] \mid g_i(x_k) = 0\}    
    \end{equation*}
    as the set of active constraints in $x_k$. In order to prove that there exists $\bar{t}\in(0,1)$ such that Definition~\ref{def::fc} is satisfied, we consider separate subcases, partitioning the set of constraints $[m]$ into the sets $[m] \setminus I_k$, $\tilde{I}_k$ and  $I_k \setminus \tilde{I}_k$. Note that $\Omega = \Omega_{[m] \setminus I_k} \cap \Omega_{\tilde{I}_k} \cap \Omega_{I_k \setminus \tilde{I}_k}$. In the following, we will show that for each constraints set $I\in\{[m] \setminus I_k,\tilde{I}_k,I_k \setminus \tilde{I}_k\}$ there exists a value $\bar t_I\in(0,1)$ such that $\gamma_k(t)\in \Omega_I,\ \forall t\in[0,\bar t_I]$. Finally, by taking $\bar t=\min_{I\in \{[m] \setminus I_k,\tilde{I}_k,I_k \setminus \tilde{I}_k\}}\bar t_I$, it will hold that 
    \begin{equation*}
        \gamma_k(t)\in\Omega_{[m] \setminus I_k} \cap \Omega_{\tilde{I}_k} \cap \Omega_{I_k \setminus \tilde{I}_k}=\Omega, \quad \forall t\in[0,\bar t].
    \end{equation*}
    
    We now define the values of $\bar t_{[m]\setminus I_k}, \bar t_{\tilde I_k}$ and $\bar t_{I_k\setminus\tilde I_k}$, one at a time.
    \begin{enumerate}
        \item We further divide the set $[m] \setminus I_k$ into 
        \begin{equation*}
            \bar{I}_k^1 = \{i \in [m] \setminus I_k \mid g_i(x_k+s_k) \le 0\} \text{ and } \bar{I}_k^2 = \{i \in [m] \setminus I_k \mid g_i(x_k+s_k) > 0\}.
        \end{equation*}
        \begin{enumerate}
            \item By $x_k \in \Omega$, Assumption \ref{ass::dk_feas}, definition of $\bar{I}_k^1$ and $\Omega \subseteq \Omega_{\bar{I}_k^1}$, we have that $P_0 = x_k \in \Omega_{\bar{I}_k^1}$, $P_1 = x_k + \frac{1}{2}d_k \in \Omega_{\bar{I}_k^1}$ and $P_2 = x_k+s_k \in \Omega_{\bar{I}_k^1}$. Thus, by equations \eqref{eq::quadratic_bezier}-\eqref{eq::gamma_CH} and the convexity of $\Omega_{\bar{I}_k^1}$, we get that $\gamma_k(t) \in \text{CH}(P_0,P_1,P_2) \subseteq\Omega_{\bar{I}_k^1}$ for all $t \in [0, 1]$. We thus set $\bar{t}_{\bar{I}_k^1} = 1$. 
            
            \item Since Assumption \ref{ass::defcurve} holds and $x_k \in \Omega$ it follows that $h_i^k(0)=g_i(\gamma_k(0)) = g_i(x_k) < 0$ for all $i \in \bar{I}_k^2$, while by definition of $\bar{I}_k^2$ it follows that $h_i^k(1)=g_i(\gamma_k(1))=g_i(x_k+s_k)>0$  for all $i \in \bar{I}_k^2$. By the sign-preservation property of continuous functions there thus exists a value $\hat{t}_i \in (0,1)$ for all $i \in \bar{I}_k^2$ such that $h_i^k(t)<0$ for all $t\in[0,\hat{t}_i]$. Accordingly, we define $\bar{t}_{\bar{I}_k^2}=\min_{i \in \bar{I}_k^2}\hat{t}_i.$
        \end{enumerate}

        We conclude the case setting 
        \begin{equation*}
            \bar{t}_{[m] \setminus I_k} = \min\left\{\bar{t}_{\bar{I}_k^1}, \bar{t}_{\bar{I}_k^2}\right\} > 0.
        \end{equation*}
        
        \item By $x_k \in \Omega$, Assumption \ref{ass::dk_feas}, definition of $\tilde{I}_k$ and $\Omega \subseteq \Omega_{\tilde{I}_k}$, we have that $P_0 = x_k \in \Omega_{\tilde{I}_k}$, $P_1 = x_k + \frac{1}{2}d_k \in \Omega_{\tilde{I}_k}$. Moreover, we know by hypothesis that $g_i(x_k + s_k) \le 0$ for all $i \in \tilde{I}_k$, i.e., $x_k + s_k \in \Omega_{\tilde{I}_k}$. Then, by equations \eqref{eq::quadratic_bezier}-\eqref{eq::gamma_CH} and the convexity of $\Omega_{\tilde I_k}$, it follows that $\gamma_k(t) \in \text{CH}(P_0, P_1, P_2) \subseteq \Omega_{\tilde{I}_k}$ for all $t \in [0,1]$. We thus define 
        \begin{equation*}
            \bar{t}_{\tilde{I}_k} = 1.
        \end{equation*}
        
        \item By Assumption \ref{ass::defcurve}, $x_k \in \Omega$, definition of $I_k \setminus \tilde{I}_k$ and $\Omega \subseteq \Omega_{I_k \setminus \tilde{I}_k}$, we have that $P_0 = x_k \in \Omega_{I_k \setminus \tilde{I}_k}$ and, in particular,
        \begin{equation}
            \label{eq::propertyxk}
            h_i^k(0) = g_i(\gamma_k(0)) = g_i(x_k) = 0, \quad \forall i \in I_k \setminus \tilde{I}_k.
        \end{equation}
        Let us assume by contradiction that $\gamma_k(t) \not \in \Omega_{I_k \setminus \tilde{I}_k}$ for all $t \in (0, 1]$, that is, there exists an index $i_t \in I_k \setminus \tilde{I}_k$ for all $t \in (0, 1]$ such that $h_{i_t}^k(t) > 0$. Combining this assumption with \eqref{eq::propertyxk}, we can write
        \begin{equation*}
            \frac{h_{i_t}^k(t)-h_{i_t}^k(0)}{t}>0, \quad \forall t \in (0, 1],
        \end{equation*}
        which in turn implies
        \begin{equation}
            \label{eq::before_limit}
            \max_{i\in I_k \setminus \tilde{I}_k}\frac{h_{i}^k(t)-h_{i}^k(0)}{t}>0, \quad \forall t \in (0, 1].
        \end{equation}
        Since $h$ is continuously differentiable and the set $I_k \setminus \tilde{I}_k$ is finite, we can thus take the limit in \eqref{eq::before_limit} for $t \to 0^+$ to obtain 
        \begin{equation}
            \label{eq::first_contradiction}
            \begin{aligned}
                0& \leq\lim_{t\to 0^+}\left(\max_{i\in I_k \setminus \tilde{I}_k}\frac{h_{i}^k(t)-h_{i}^k(0)}{t}\right) = \max_{i\in I_k \setminus \tilde{I}_k}\left(\lim_{t\to0^+}\frac{h_i^k(t)-h_i^k(0)}{t}\right) \\ & = \max_{i\in I_k \setminus \tilde{I}_k} (h_i^k)'(0)=(h_{i^\star}^k)'(0)=\nabla g_{i^\star}(\gamma_k(0))^\top\gamma_k'(0) = \nabla g_{i^\star}(x_k)^\top d_k,
            \end{aligned}
        \end{equation}
        where $i^*\in I_k\setminus\tilde I_{k}$ is such that $i^*\in\arg\max_{i\in I_k\setminus\tilde I_k}(h_i^k)'(0)$ and
        the last equality holds by Assumption \ref{ass::defcurve}.

        By Assumption \ref{ass::dk_feas}, definition of $I_k \setminus \tilde{I}_k$ and $\Omega \subseteq \Omega_{I_k \setminus \tilde{I}_k}$, we have that $ \tilde{x}_k \in \Omega_{I_k \setminus \tilde{I}_k}$ and, in particular,
        \begin{equation}
            \label{eq::propertytildexk}
            g_{i^\star}(\tilde{x}_k) < 0.
        \end{equation}
        The continuous differentiability and convexity of $g_{i^\star}$ also implies that 
        \begin{equation}
            \label{eq::cdcg}
            g_{i^\star}(\tilde{x}_k) \ge g_{i^\star}(x_k) + \nabla g_{i^\star}(x_k)^\top(\tilde{x}_k - x_k).
        \end{equation}
        Thus, we can write 
        \begin{equation*}
            0 > g_{i^\star}(\tilde{x}_k)\geq \nabla g_{i^\star}(x_k)^\top(\tilde{x}_k - x_k) = \tilde{t}\nabla g_{i^\star}(x_k)^\top d_k,
        \end{equation*}
        where the first inequality follows from \eqref{eq::propertytildexk}, the second one is obtained combining equations \eqref{eq::propertyxk} and \eqref{eq::cdcg}, and the last equality is derived by definition of $\tilde{x}_k$.
        
        Since $\tilde{t} > 0$, the last result is in contradiction with equation \eqref{eq::first_contradiction}. Then, we conclude that 
        \begin{equation*}
            \exists \bar{t}_{I_k \setminus \tilde{I}_k} \in (0, 1] \text{ s.t.\ } \gamma_k\left(\bar{t}_{I_k \setminus \tilde{I}_k}\right) \in \Omega_{I_k \setminus \tilde{I}_k}.
        \end{equation*}
        Moreover, by equation \eqref{eq::propertyxk}, Assumption \ref{ass::dk_feas}, definition of $I_k \setminus \tilde{I}_k$ and $\Omega \subseteq \Omega_{I_k \setminus \tilde{I}_k}$ we note that $P_0 = x_k \in \Omega_{I_k \setminus \tilde{I}_k}$ and $P_1 = x_k + \frac{1}{2}d_k \in \Omega_{I_k \setminus \tilde{I}_k}$ Thus, it follows by the convexity of $\Omega_{I_k \setminus \tilde{I}_k}$ that $CH\left(P_0,P_1,\gamma_k\left(\bar{t}_{I_k \setminus \tilde{I}_k}\right)\right)\subseteq\Omega_{I_k \setminus \tilde{I}_k}$. By Lemma \ref{lemma::CHgamma} we finally get that 
        \begin{equation*}
            \gamma_k(t) \in \text{CH}\left(P_0, P_1, \gamma_k\left(\bar{t}_{I_k \setminus \tilde{I}_k}\right)\right) \subseteq \Omega_{I_k \setminus \tilde{I}_k}, \quad \forall t \in \left[0, \bar{t}_{I_k \setminus \tilde{I}_k}\right].
        \end{equation*}
    \end{enumerate}
    Setting $\bar{t}=\min\left\{\bar{t}_{[m] \setminus I_k},\bar{t}_{\tilde{I}_k},\bar{t}_{I_k \setminus \tilde{I}_k}\right\} > 0$ concludes the proof.
\end{proof}

In the next example, we show that the final hypothesis of Proposition \ref{prop::feasibility}, that is, $g_i(x_k+s_k)\leq0$ for all $i\in\tilde I_k$, is indeed necessary to ensure the feasibility of the curve.

\begin{example}
\label{ex::example}

Figure~\ref{fig::example} illustrates two case studies of quadratic Bézier curves of the form \eqref{eq::quadratic_gamma} applied to a two-dimensional toy problem subject to a single linear constraint defining the feasible set 
\begin{equation*}
    \Omega = \{x=[x^1,x^2]^\top \in \mathbb{R}^2 \mid x^1 \le 0\},    
\end{equation*}
i.e., $g_1(x)=x^1$. Let $x_k = [0, 0]^\top$ and $d_k = [0, 4]^\top$. The first two control points of the curve are then $P_0 = x_k$ and $P_1 = x_k + \frac{1}{2}d_k = [0, 2]^\top$. According to Proposition \ref{prop::feasibility}, let us consider $\tilde t=\frac{1}{4}$ and define $\tilde{x}_k = x_k+\frac{1}{4}d_k$. The constraint $g_1(x)\le 0$ is active at $\tilde{x}_k$ and, thus, $\tilde{I}_k = \{1\}$. 

\begin{figure}[ht]
     \centering
     \subfloat[The third control point $P_2$ is feasible with respect to the constraint active at $\tilde{x}_k$. Thus, $\gamma_k$ is a feasible curve.]{\includegraphics[width=0.49\textwidth]{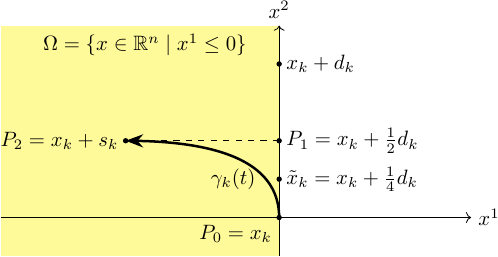}}
     \hfill
     \subfloat[The third control point $P_2$ is infeasible with respect to the constraint active at $\tilde{x}_k$. Here, $\gamma_k$ is not a feasible curve.]{\includegraphics[width=0.49\textwidth]{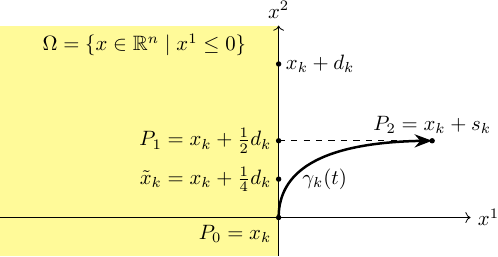}}
     \caption{Two case studies of quadratic curves of the form \eqref{eq::quadratic_gamma} on the two-dimensional problem with one constraint described in Example \ref{ex::example}.}
     \label{fig::example}
\end{figure}

We consider two distinct cases for the direction $s_k$. In the first scenario shown on the left in Figure~\ref{fig::example}, $s_k = [-4, 2]^\top$, which results in $P_2 = x_k+s_k=[-4,2]^\top$ and $g_1(x_k+s_k)\leq0$, i.e., the final hypothesis of Proposition \ref{prop::feasibility} is satisfied. It thus follows that the curve $\gamma_k$ with control points $P_0,P_1$ and $P_2$ is feasible. In the second scenario shown on the right in Figure~\ref{fig::example}, $s_k = [4, 2]^\top$. In this case, point $P_2$ violates the active constraint and thus fails to satisfy the final hypothesis of Proposition \ref{prop::feasibility}. The violation of the hypothesis implies that the curve is infeasible, i.e., there exists no $\bar{t} \in (0, 1]$ such that $\gamma_k(t) \in \Omega$ for all $t \in [0, \bar{t}]$. This result can be further formalized from a mathematical perspective (analogous reasoning can be applied to the first scenario). The equation of curve $\gamma_k$ can be written following equation \eqref{eq::quadratic_bezier} as follows:
\begin{equation*}
    \gamma_k(t) = (1-t^2)\begin{bmatrix}
        0\\0
    \end{bmatrix} + 2t(1-t) \begin{bmatrix}
        0\\2
    \end{bmatrix} + t^2 \begin{bmatrix}
        4\\2
    \end{bmatrix} = \begin{bmatrix}
        4t^2\\4t - 2t^2
    \end{bmatrix}.
\end{equation*}
It is thus easy to verify that $g_1(\gamma_k(t)) = 4t^2 > 0$, for all $t \in (0, 1]$, which implies that $\gamma_k$ is not a feasible curve for $\Omega$ at $x_k$. In such a scenario, we cannot rely on curve search methods and we must fall back on standard line searches; further details will be provided in Section~\ref{subsec::algorithmic_scheme}.
\end{example}

\begin{remark}
    Although the final hypothesis of Proposition~\ref{prop::feasibility} on the point $P_2 = x_k + s_k$ may appear strong or somewhat cryptic for the use of curve searches on constrained problems, it is in fact necessary to handle the specific situation in which the current iterate $x_k$ lies on the boundary of a linear constraint and the gradient-related direction $d_k$ suggests moving along that constraint boundary (as illustrated in Example \ref{ex::example}). However, outside of this specific scenario the assumption is not particularly restrictive and is indeed automatically satisfied by a wide class of convex feasible regions characterized by curved boundaries. In particular, any convex feasible set whose boundary does not contain a segment (e.g., spherical or ellipsoidal) satisfies said assumption, since the set $\tilde I_k$ is always empty as $\tilde x_k$ is a point that belongs to the interior of $\Omega$ by construction. 
\end{remark}

\subsection{Algorithmic Scheme}
\label{subsec::algorithmic_scheme}

In Algorithm \ref{alg::CSM_constrained}, we provide the algorithmic scheme of our curve search method for smooth convexly constrained optimization problems.

\begin{algorithm}[ht]\caption{Curve Search method for smooth convexly constrained problems}
	\label{alg::CSM_constrained}
	\begin{algorithmic}[1]
		\REQUIRE 
			$x_0\in\Omega$, $\alpha, \beta > 0$, $\tilde{t} \in (0, 1)$, $\delta \in (0, 1)$, $\sigma \in (0, 1)$, $\{\varepsilon_k\} \subseteq \mathbb{R}_+$ a decreasing sequence.
		\STATE Set $k=0$
        \STATE Set $x_{-1}=x_0$
        \WHILE{stopping criterion is not satisfied\label{line::while}}
        \STATE Compute feasible gradient-related direction $d_k$\label{line::d_k}
        \STATE Compute direction $s_k = \alpha d_k + \beta(x_k - x_{k-1})$\label{line::s_k}
        \STATE Compute set $\tilde{I}_k=\{i\in[m]|g_i(x_k + \tilde{t}d_k) \ge -\varepsilon_k\}$\label{line::Ik}
        \IF{$\exists i\in \tilde{I}_k: g_i(x_k+s_k) > 0$ \textbf{or} $k = 0$ \label{line::if_restore}}
        \STATE Set $s_k = d_k$ \label{line::restore}
        \ENDIF
        \STATE Let $\gamma_k(t)=x_k + td_k + t^2(s_k - d_k)$\label{line::gamma}
        \STATE Compute $t_k = \max_{h \in \mathbb{N}}\{\delta^h\mid \gamma_k(\delta^h) \in \Omega \land f(\gamma_k(\delta^h))\le f(x_k)+\sigma \delta^h \nabla f(x_k)^\top d_k\}$\label{line::armijo}
        \STATE Set $x_{k+1}=\gamma_k(t_k)$\label{line::next_iterate}
        \STATE Set $k = k + 1$
        \ENDWHILE
		\RETURN $x_k$\label{line::return}
	\end{algorithmic}
\end{algorithm}

Starting from a feasible point $x_0 \in \Omega$, at each iteration we compute in line \ref{line::d_k} a feasible gradient-related direction $d_k$ and in line \ref{line::s_k} the Polyak's heavy-ball direction $s_k$, adapted to the constrained case (see also equation \eqref{eq::polyak_constrained}).

In order to define the curve in line \ref{line::gamma}, we first check if a feasible curve can be constructed. According to Proposition \ref{prop::feasibility}, we verify whether the point $x_k + s_k$ is feasible with respect to the active constraints at the point $\tilde{x}_k = x_k + \tilde{t}d_k$ (lines \ref{line::Ik}--\ref{line::if_restore}). If this condition is not satisfied (or if we are at the first iteration of the algorithm), we set $s_k = d_k$, where $d_k$ is the gradient-related direction. Note that, with this choice, we obtain
\begin{equation*}
    \gamma_k(t) = x_k + td_k + t^2(d_k - d_k) = x_k + td_k,
\end{equation*}
that is, the curve reduces to a straight line. It is easy to verify that, in this case, the assumption of Proposition \ref{prop::feasibility} is automatically satisfied, since $g_i(x_k + d_k) \leq 0$ for each $i \in [m]$, as we assume that the gradient-related direction $d_k$ satisfies Assumption \ref{ass::dk_feas}. Otherwise, the curve is defined as in equation~\eqref{eq::quadratic_gamma}, which corresponds to the standard behavior of the algorithm.

Figure~\ref{fig::example_curves} illustrates the behavior of the algorithm in the two scenarios described in Example~\ref{ex::example}.

\begin{figure}[ht]
    \subfloat[Since $P_2$ is feasible with respect to the active constraints at $\tilde{x}_k$, the quadratic curve $\gamma_k$ of the form \eqref{eq::quadratic_gamma} is employed for the Armijo-type curve search.]{\includegraphics[width=0.49\textwidth]{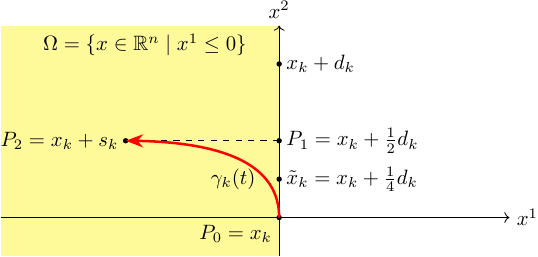}}
    \hfill
    \subfloat[When $P_2$ is infeasible with respect to the active constraints at $\tilde{x}_k$, the algorithm reverts to the direction $d_k$, thus employing a classical Armijo-type line search.]{\includegraphics[width=0.49\textwidth]{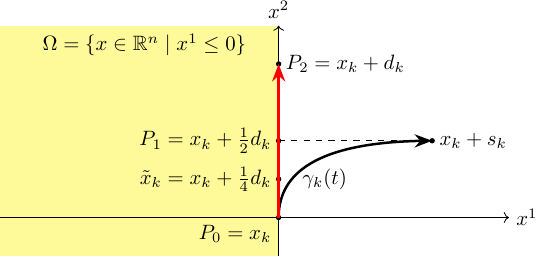}}
     \caption{Graphical representation of the curves considered by Algorithm \ref{alg::CSM_constrained} in the two scenarios described in Example \ref{ex::example}.}
     \label{fig::example_curves}
\end{figure}

Note that, for convergence reasons, at each iteration $k$ of the algorithm the set $\tilde{I}_k$ is computed in line \ref{line::Ik} so as to contain the indices of the constraints that are active at $\tilde{x}_k$ and those that are nearly active within a tolerance $\varepsilon_k$. This tolerance is decreased at each iteration of the algorithm in order to progressively focus only on the constraints that are active at $\tilde{x}_k$.

In line \ref{line::armijo} we perform the Armijo-type curve search described in equation \eqref{eq::armijo-curve} with the additional requirement that the next iterate (line \ref{line::next_iterate}) must be feasible. Whenever line \ref{line::restore} is executed, that is, $s_k = d_k$ and $\gamma_k$ reduces to a straight line, the Armijo-type curve search reduces to a standard line search, with the feasibility condition already satisfied since $d_k$ satisfies Assumption \ref{ass::dk_feas}.

The algorithm proceeds until a suitable stopping criterion (e.g., stationarity) is met (line \ref{line::while}), returning the current iterate $x_k$ (line \ref{line::return}).

\subsection{Convergence Analysis}

In this section, we present a theoretical analysis of Algorithm \ref{alg::CSM_constrained}.

We start by proving the finite termination of the Armijo-type curve search in line \ref{line::armijo}, which in turn ensures that Algorithm \ref{alg::CSM_constrained} is well-defined.

\begin{proposition}
    \label{prop::ACSprop}
    Let $x_k \in \Omega$ and $\gamma_k$ be a gradient-related curve of the form \eqref{eq::quadratic_gamma} satisfying Assumptions \ref{ass::defcurve}-\ref{ass::dk_feas}. Then, the Armijo-type curve search procedure in line \ref{line::armijo} of Algorithm \ref{alg::CSM_constrained} terminates in a finite number of iterations returning a valid stepsize $t_k > 0$. Moreover, one of the two following conditions holds:
    \begin{enumerate}
        \item[(i)] $t_k=1$;
        \item[(ii)] if $t_k<1$, then $\gamma_k(\frac{t_k}{\delta}) \not \in \Omega$ or $f(\gamma_k(\frac{t_k}{\delta}))>f(x_k)+\sigma \frac{t_k}{\delta}\nabla f(x_k)^\top d_k$.
    \end{enumerate}
\end{proposition}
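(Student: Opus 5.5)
The plan is to show that, for every sufficiently small $t>0$, both requirements in line~\ref{line::armijo} hold simultaneously: $\gamma_k(t)\in\Omega$ and the Armijo-type inequality. Since $\delta^h\to0$, some finite $h$ then gives an admissible $\delta^h$, and the maximum over $h$ is attained. Throughout I assume, as implicit in the algorithm (which stops at stationary points), that $x_k$ is nonstationary. Then $d_k$ is a feasible descent direction, so $\nabla f(x_k)^\top d_k<0$.

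\emph{Step 1 (feasibility).} I distinguish the two possible outcomes of lines~\ref{line::Ik}--\ref{line::restore}.
\begin{enumerate}[(a)]
\item If line~\ref{line::restore} is executed, then $\gamma_k(t)=x_k+td_k$. By Assumption~\ref{ass::dk_feas}, $\gamma_k(t)\in\Omega$ for all $t\in[0,1]$.
\item Otherwise $g_i(x_k+s_k)\le0$ for every $i\in\tilde I_k$, where $\tilde I_k$ is the set computed in line~\ref{line::Ik}. Since $\varepsilon_k\ge0$, this set contains every index $i$ with $g_i(x_k+\tilde t d_k)=0$, which is exactly the active set appearing in Proposition~\ref{prop::feasibility}. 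The hypothesis of that proposition therefore holds, and it yields $\bar t\in(0,1]$ with $\gamma_k(t)\in\Omega$ for all $t\in[0,\bar t]$.
\end{enumerate}
In both cases there is a $\bar t>0$ such that the whole initial arc $\gamma_k([0,\bar t])$ is feasible.

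\emph{Step 2 (sufficient decrease).} Let $\phi(t)=f(\gamma_k(t))$. By Assumption~\ref{ass::defcurve}, $\phi$ is continuously differentiable with $\phi'(0)=\nabla f(x_k)^\top\gamma_k'(0)=\nabla f(x_k)^\top d_k<0$. Since $\sigma\in(0,1)$, we have $\phi'(0)<\sigma\nabla f(x_k)^\top d_k$. I argue by contradiction: suppose there is a sequence $t_j\downarrow0$ with $\phi(t_j)>\phi(0)+\sigma t_j\nabla f(x_k)^\top d_k$. Dividing by $t_j$ and letting $j\to\infty$ gives $\phi'(0)\ge\sigma\nabla f(x_k)^\top d_k$, which is impossible. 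Hence there is $\hat t>0$ such that the Armijo inequality holds for all $t\in(0,\hat t]$.

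\emph{Step 3 (termination and alternatives).} Both conditions hold for every $t\in(0,\min\{\bar t,\hat t\}]$. Since $\delta^h\to0$, the set of admissible $h$ is nonempty, and its least element $h_k$ gives $t_k=\delta^{h_k}>0$ after finitely many backtracks. For the dichotomy:
\begin{enumerate}[(a)]
\item If $h_k=0$, then $t_k=1$, which is case~(i).
\item If $h_k\ge1$, then $t_k<1$ and, by minimality of $h_k$, the trial value $\delta^{h_k-1}=t_k/\delta$ was rejected. Being rejected means that either $\gamma_k(t_k/\delta)\notin\Omega$ or the Armijo inequality fails at $t_k/\delta$, which is case~(ii).
\end{enumerate}

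The only delicate point I expect is Step~1(b). One must check that the tolerance-enlarged set of line~\ref{line::Ik} only strengthens the test, so that Proposition~\ref{prop::feasibility} applies. One must also make sure that feasibility holds on a whole interval $[0,\bar t]$, not merely at isolated points. Otherwise backtracking might never land on a feasible $\delta^h$, and the argument of Step~3 would break down.
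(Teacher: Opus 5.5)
Your proof is correct and follows the same route as the paper. Both split on whether line~\ref{line::restore} is executed, get feasibility on an initial arc from Assumption~\ref{ass::dk_feas} or Proposition~\ref{prop::feasibility}, get the Armijo inequality on an initial interval, take the smaller of the two, and read off the dichotomy from the last rejected trial step $t_k/\delta$. The only differences are refinements: you prove the Armijo part directly rather than citing Proposition~1 of the earlier unconstrained paper, and you explicitly check that the $\varepsilon_k$-enlarged set $\tilde I_k$ of line~\ref{line::Ik} contains the exactly active set, so that Proposition~\ref{prop::feasibility} genuinely applies, a point the paper passes over silently.
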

\begin{proof}
    We distinguish between the case in which $s_k$ is defined in line \ref{line::s_k} and the case in which it is overwritten by setting $s_k = d_k$ in line \ref{line::restore}.
    
    If $s_k = d_k$, it follows from Assumption \ref{ass::dk_feas} that the feasibility condition in line \ref{line::armijo} is already satisfied. The Armijo-type curve search thus reduces to a standard line search, for which finite termination results are already known \cite[Section 20.2]{grippo2023introduction}.
    
    Otherwise, if $s_k$ is defined according to line \ref{line::s_k}, we know from \cite[Proposition 1]{donnini2025efficientglobalizationheavyballtype} that, in a finite number of iterations, we can find $\hat{t}_k > 0$ such that the Armijo sufficient decrease condition is satisfied for any $t \in (0, \hat{t}_k]$. Furthermore, since in this case $s_k$ is not overwritten in line \ref{line::restore}, the curve $\gamma_k$ satisfies the assumptions of Proposition \ref{prop::feasibility}, which implies that $\gamma_k$ is a feasible curve. Hence, there exists $\tilde{q} \in \mathbb{N}$ such that $\tilde{t}_k = \delta^{\tilde{q}} > 0$ and $\gamma_k(t) \in \Omega$ for all $t \in [0, \tilde{t}_k]$.
    We thus set $t_k = \min\{\hat{t}_k, \tilde{t}_k\} > 0$, so that both the Armijo sufficient decrease and the feasibility conditions are satisfied for all $t \in (0, t_k]$.
        
    Now, if no backtracking is needed, condition (i) straightforwardly holds. On the other hand, if at least one backtracking step is performed, then the stepsize $\frac{t_k}{\delta}$ is the last tested step not satisfying at least one of the Armijo-type conditions in line \ref{line::armijo} of Algorithm \ref{alg::CSM_constrained}; thus, condition (ii) holds. This concludes the proof.
\end{proof}

We continue by characterizing the points produced by our curve search method.

\begin{proposition}
    \label{prop::feasibility2}
    Let $\{(x_k,\gamma_k)\}$ be the sequence generated by Algorithm \ref{alg::CSM_constrained} where, for all $k$, $\gamma_k$ is a gradient-related curve satisfying Assumption \ref{ass::dk_feas}. Then, we have that $\{x_k\} \subseteq \Omega$.
\end{proposition}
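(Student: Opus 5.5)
The plan is to argue by induction on $k$. The base case is immediate, since Algorithm~\ref{alg::CSM_constrained} requires $x_0 \in \Omega$ (and sets $x_{-1} = x_0$).

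For the inductive step, assume $x_k \in \Omega$. Then $d_k$ is a feasible direction satisfying Assumption~\ref{ass::dk_feas}, so every object used at iteration $k$ is well defined. By Proposition~\ref{prop::ACSprop}, the curve search in line~\ref{line::armijo} terminates finitely and returns $t_k = \delta^{h}$ for some $h \in \mathbb{N}$. By the very definition of $t_k$ in line~\ref{line::armijo}, the accepted value satisfies both conditions in the set, in particular $\gamma_k(t_k) \in \Omega$. Since $x_{k+1} = \gamma_k(t_k)$ by line~\ref{line::next_iterate}, we get $x_{k+1} \in \Omega$.

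One point needs care: Proposition~\ref{prop::ACSprop} requires its hypotheses to hold at iteration $k$. I will check the two branches separately.
\begin{itemize}
\item If line~\ref{line::restore} is executed, then $\gamma_k(t) = x_k + t d_k$. Feasibility for all $t \in [0,1]$ follows directly from Assumption~\ref{ass::dk_feas}, and the search reduces to a standard Armijo line search.
\item Otherwise, $g_i(x_k + s_k) \le 0$ for every $i$ in the set computed in line~\ref{line::Ik}. Since $\varepsilon_k \ge 0$, this set contains the exactly active set $\{i : g_i(x_k + \tilde t d_k) = 0\}$. Hence the hypothesis of Proposition~\ref{prop::feasibility} holds, and $\gamma_k$ is a feasible curve.
\end{itemize}
In both branches, the admissible set of exponents in line~\ref{line::armijo} is therefore nonempty, as established in Proposition~\ref{prop::ACSprop}.

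The only real subtlety is the inclusion of the exactly active set in the $\varepsilon_k$-enlarged set $\tilde I_k$. Once that is noted, the result follows by induction, and no further estimates are needed. If $x_k$ happens to be stationary, the algorithm stops, and the finite sequence lies in $\Omega$ trivially.
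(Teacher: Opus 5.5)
Your proof is correct and follows the same route as the paper: induction on $k$, with Proposition~\ref{prop::ACSprop} guaranteeing that the curve search in line~\ref{line::armijo} returns a stepsize whose acceptance condition already includes $\gamma_k(t_k)\in\Omega$. You also check explicitly that the exactly active set at $x_k+\tilde t d_k$ is contained in the $\varepsilon_k$-enlarged set $\tilde I_k$, so that Proposition~\ref{prop::feasibility} applies when line~\ref{line::restore} is skipped; the paper leaves this detail implicit, and you handle it correctly.
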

\begin{proof}
    By the instructions of Algorithm~\ref{alg::CSM_constrained} and Assumption~\ref{ass::dk_feas}, at iteration $k = 0$ we have $x_0 \in \Omega$ and the curve $\gamma_0$ is a feasible descent curve. Therefore, by line~\ref{line::armijo} of Algorithm~\ref{alg::CSM_constrained} and Proposition~\ref{prop::ACSprop}, we obtain that $x_1 = \gamma_0(t_0) \in \Omega$. The thesis then follows by induction.
\end{proof}

Before stating the global convergence result of the proposed curve search method, we must introduce the following reasonable assumptions, similar to those used in \cite{donnini2025efficientglobalizationheavyballtype}.

\begin{assumption}
    \label{ass::L0}
    Let $x_0 \in \Omega$. The level set $\mathcal{L}_0 = \{x \in \Omega \mid f(x) \le f(x_0)\}$ is compact.
\end{assumption}

\begin{assumption}
    \label{ass::xi}
    Let $\{(x_k, \gamma_k)\}$ be the sequence generated by Algorithm \ref{alg::CSM_constrained}. The sequence $\{s_k\}$ is bounded, i.e., $\exists C > 0$ such that $\|s_k\| \le C$ for all $k$.
\end{assumption}

\begin{proposition}\label{prop:globalconvm}
    Let Assumption \ref{ass::L0} hold. Let $\{(x_k,\gamma_k)\}$ be the sequence generated by Algorithm \ref{alg::CSM_constrained} such that:
    \begin{enumerate}
        \item for all $k$, $\gamma_k$ is a gradient-related curve satisfying Assumptions \ref{ass::defcurve}-\ref{ass::dk_feas};
        \item Assumption \ref{ass::xi} holds for sequence $\{s_k\}$. 
    \end{enumerate}  
    Then, the sequence $\{x_k\}$ admits accumulation points and each accumulation point $\bar{x}$ is stationary for problem \eqref{eq::con-min}.
\end{proposition}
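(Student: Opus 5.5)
We argue by contradiction along a subsequence converging to a nonstationary point. The outline is the standard Armijo convergence proof for gradient-related directions, adapted to curves.

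\textbf{Accumulation points and vanishing decrease.} By Proposition~\ref{prop::feasibility2}, $\{x_k\}\subseteq\Omega$. The Armijo condition in line~\ref{line::armijo} together with $\nabla f(x_k)^\top d_k<0$ makes $\{f(x_k)\}$ monotonically nonincreasing. Hence $\{x_k\}\subseteq\mathcal{L}_0$, which is compact by Assumption~\ref{ass::L0}, so accumulation points exist. Moreover $f$ is bounded below on $\mathcal{L}_0$, so $f(x_k)-f(x_{k+1})\to0$, and the Armijo inequality gives
\begin{equation*}
t_k\,\nabla f(x_k)^\top d_k\to 0 .
\end{equation*}

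\textbf{Setting up the contradiction.} Let $x_k\to\bar x$ for $k\in K$, and suppose $\bar x$ is not stationary. Gradient-relatedness (Definition~\ref{def::gradient-related}, with $\gamma_k'(0)=d_k$) gives that $\{d_k\}_K$ is bounded and $\limsup_{k\in K}\nabla f(x_k)^\top d_k<0$. Therefore $t_k\to0$ on $K$. Passing to a further subsequence, we may assume $d_k\to\bar d$ with $\nabla f(\bar x)^\top\bar d<0$, $s_k\to\bar s$ (bounded by Assumption~\ref{ass::xi}), and $t_k<1$. By Proposition~\ref{prop::ACSprop}(ii), for each such $k$ either $\gamma_k(t_k/\delta)\notin\Omega$ or the Armijo condition fails at $t_k/\delta$. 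One of these alternatives occurs infinitely often, so we treat the two cases separately.

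\textbf{Case of Armijo failure.} Apply the mean value theorem to $f(\gamma_k(\tau))$ on $[0,\tau_k]$ with $\tau_k=t_k/\delta\to0$. This gives
\begin{equation*}
\nabla f(\gamma_k(\theta_k\tau_k))^\top\bigl(d_k+2\theta_k\tau_k(s_k-d_k)\bigr)>\sigma\nabla f(x_k)^\top d_k .
\end{equation*}
By boundedness of $d_k,s_k$ and continuity, $\gamma_k(\theta_k\tau_k)\to\bar x$. Taking the limit yields $(1-\sigma)\nabla f(\bar x)^\top\bar d\ge0$, which is a contradiction.

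\textbf{Case of infeasibility (the main obstacle).} This case is new relative to the unconstrained setting. If $s_k=d_k$ (line~\ref{line::restore}), then $\gamma_k(\tau_k)=x_k+\tau_kd_k\in\Omega$ for $\tau_k\le1$ by Assumption~\ref{ass::dk_feas}, so infeasibility cannot occur. Thus only heavy-ball iterations matter, and for them $g_i(x_k+s_k)\le0$ for all $i\in\tilde I_k$. By Lemma~\ref{lemma::CHinfeas}, infeasibility at $\tau_k$ forces some index $i$ with $g_i(x_k+s_k)>0$; necessarily $i\notin\tilde I_k$, i.e.\ $g_i(x_k+\tilde t d_k)<-\varepsilon_k$. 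Passing to a subsequence, fix this index $i$. I would then mimic item~3 of the proof of Proposition~\ref{prop::feasibility}, but uniformly in $k$. By convexity,
\begin{equation*}
\nabla g_i(x_k)^\top d_k\le\frac{g_i(x_k+\tilde t d_k)-g_i(x_k)}{\tilde t}.
\end{equation*}
Meanwhile $g_i(\gamma_k(\tau_k))>0\ge g_i(x_k)$, and a mean value expansion along the curve gives $\nabla g_i(\gamma_k(\theta\tau_k))^\top(d_k+2\theta\tau_k(s_k-d_k))>0$. In the limit this yields $\nabla g_i(\bar x)^\top\bar d\ge0$. To reach a contradiction one needs $g_i(\bar x+\tilde t\bar d)<0$ strictly, but the limit of $g_i(x_k+\tilde td_k)<-\varepsilon_k$ only gives $\le0$.

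This is the delicate point, and it is where I would exploit the $\varepsilon_k$ mechanism. If $\varepsilon_k\not\to0$, the bound $g_i(x_k+\tilde td_k)<-\varepsilon_k\le-\bar\varepsilon<0$ is uniform, and the argument closes: the limit gives $g_i(\bar x+\tilde t\bar d)<0$ and, combined with $g_i(\bar x)=\lim g_i(x_k)\le0$, convexity contradicts $\nabla g_i(\bar x)^\top\bar d\ge0$ exactly as in Proposition~\ref{prop::feasibility}. If $\varepsilon_k\to0$, I would try a different route: use $g_i(x_k+s_k)>0$ and $g_i\le0$ on $\mathrm{CH}(P_0,P_1,\cdot)$ to lower-bound, uniformly in $k$, the first crossing time of $g_i$ along $\gamma_k$. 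This would contradict $\tau_k\to0$, via the quadratic Bézier structure and a uniform Lipschitz bound on $g_i$ over the compact neighbourhood of $\mathcal{L}_0$ containing all curves.

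In either case both alternatives are impossible, so $\bar x$ is stationary.
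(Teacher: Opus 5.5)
\textbf{Overall.} Your skeleton is the paper's. Descent keeps $\{x_k\}$ in $\mathcal{L}_0$, and gradient-relatedness forces $t_k\to0$ on $K$. The failure at $t_k/\delta$ then splits into two cases. An Armijo failure is closed by the mean value theorem, as in the paper's case (i). Infeasibility of a heavy-ball curve is closed via Lemma~\ref{lemma::CHinfeas} and the test on $\tilde I_k$, as in case (iii). Working directly with $\tau_k=t_k/\delta\to0$, instead of a fixed $\delta^q$ plus a second limit, is legitimate and slightly cleaner: it avoids subsequences, limits and an index $\hat i$ that depend on $q$.

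\textbf{A slip in the branch $\varepsilon_k\not\to0$.} The convexity argument needs $g_i(\bar x)\ge0$, not $g_i(\bar x)\le0$. You get it from $g_i(\gamma_k(\tau_k))>0$ and $\gamma_k(\tau_k)\to\bar x$. Then, with $\bar\varepsilon=\lim_k\varepsilon_k>0$, the chain $0\le g_i(\bar x)\le g_i(\bar x)+\tilde t\nabla g_i(\bar x)^\top\bar d\le g_i(\bar x+\tilde t\bar d)\le-\bar\varepsilon$ is the contradiction, and that branch is complete.

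\textbf{The genuine gap: the branch $\varepsilon_k\to0$.} This is the case the paper actually runs, with $\varepsilon_k=0.1\cdot0.95^k$. The route you sketch cannot work, because the ingredients you list give no uniform lower bound on the first crossing time. Take $\Omega=\{x\in\mathbb{R}^2\mid x^1\le0\}$, $x_k=(-a_k,0)^\top$, $d_k=(0,1)^\top$ and $s_k=(c,0)^\top$ with $c>0$ fixed. Then:
\begin{itemize}
\item $P_0,P_1\in\Omega$ and $g_1(x_k+s_k)=c-a_k>0$;
\item the constraint is excluded from $\tilde I_k$ whenever $a_k>\varepsilon_k$, e.g.\ $a_k=2\varepsilon_k$;
\item all data are uniformly bounded and Lipschitz.
\end{itemize}
Yet $g_1(\gamma_k(\tau))=-a_k+c\tau^2$ turns positive at $\tau=\sqrt{a_k/c}\to0$. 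So in this branch the condition $i\notin\tilde I_k$ carries no uniform margin. Any proof would have to use the dynamics of the iterates, e.g.\ how the momentum $x_k-x_{k-1}$ compares with the distance to the boundary, not the geometry of a single curve. As written, your proposal does not prove the statement when $\varepsilon_k\to0$.

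\textbf{The paper does not close this case either.} Its case (iii) infers $g_{\hat i}(x_k+\tilde t d_k)\ge-\varepsilon_k$ for large $k$ from continuity of $g_{\hat i}$ and monotonicity of $\{\varepsilon_k\}$. But $x_k+\tilde t d_k\in\Omega$ by Assumption~\ref{ass::dk_feas}, and $\Omega$ is closed. Hence the limit value $g_{\hat i}(\bar x+\tilde t\hat d)$, shown there to be $\ge0$, is exactly $0$, and the inference holds only when $\lim_k\varepsilon_k>0$. The paper's argument therefore coincides with your first branch. Under the tacit hypothesis $\inf_k\varepsilon_k>0$ (e.g.\ a constant tolerance), your proof, with the slip fixed, is complete and equivalent to the paper's. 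For $\varepsilon_k\to0$, neither your sketch nor the paper's text supplies a valid argument.
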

\begin{proof}
    By Proposition \ref{prop::feasibility}, we have that $\{x_k\} \subseteq \Omega$. Moreover, by hypothesis, the sequence $\{\gamma_k\}$ is gradient-related to $\{x_k\}$. Thus, we get by the Armijo-type sufficient decrease condition in line \ref{line::armijo} of Algorithm \ref{alg::CSM_constrained} that, for all $k$,
    \begin{equation*}
        f(x_{k+1})=  f(\gamma_k(t_k)) \le f(x_k) + \sigma t_k \nabla f(x_k)^\top d_k < f(x_k),
    \end{equation*}
    where the last inequality follows from Definition~\ref{def::gradient-related}. This last result implies that $\{f(x_k)\}$ is monotone decreasing. Hence, 
    \begin{equation}
        \label{eq::xkL0}
        \{x_k\} \subseteq \mathcal{L}_0 \subseteq \Omega.
    \end{equation}
    
    Since $\mathcal{L}_0$ is compact by Assumption \ref{ass::L0}, it follows that $\{x_k\}$ admits accumulation points, each one belonging to $\mathcal{L}_0$. Furthermore, the sequence $\{f(x_k)\}$ converges to some finite value $f^\star$ and 
    \begin{equation}
        \label{limfk}
        \lim_{k\to\infty} f(x_{k+1})-f(x_k)=0.
    \end{equation}
    Let $\bar{x}$ be one of the accumulation points, i.e., there exists a sub-sequence $K\subseteq\{0,1,\dots\}$ such that $\lim_{k\in K,k\to\infty} x_k=\bar{x}$. Let us assume by contradiction that $\bar{x}$ is not stationary.
  
    By the Armijo sufficient decrease condition in line \ref{line::armijo} of Algorithm \ref{alg::CSM_constrained}, we know that, for all $k$,
    \begin{equation*}
        f(x_{k+1})-f(x_k)\leq \sigma t_k\nabla f(x_k)^\top d_k.
    \end{equation*}
    Recalling equation \eqref{limfk}, we can take the limit in the previous inequality: 
    \begin{equation*}
        \lim_{k\in K,k\to\infty} \sigma t_k\nabla f(x_k)^\top d_k \ge 0.
    \end{equation*}
    Given $\sigma \in (0, 1)$, $t_k \in [0, 1]$ and, by definition of $d_k$, $\nabla f(x_k)^\top d_k \le 0$, the last result yields $\lim_{k\in K,k\to\infty} t_k\nabla f(x_k)^\top d_k = 0$. Since $\{\gamma_k\}$ is gradient-related to $\{x_k\}$ (Definition~\ref{def::gradient-related}) and the subsequence $\{x_k\}_{k \in K}$ converges to a non-stationary point, it then follows that 
    \begin{equation*}
        \lim_{k \in K, k \to \infty} t_k = 0.
    \end{equation*}
    
    The last limit implies that, for any given $q\in\mathbb{N}$, we have for $k \in K$ sufficiently large $t_k<\delta^q$. It thus follows by Proposition \ref{prop::ACSprop} that the stepsize $t = \delta^q$ does not satisfy at least one of the Armijo conditions in line \ref{line::armijo} of Algorithm \ref{alg::CSM_constrained}: 
    \begin{equation}
        \label{eq::armijo_unsatisfied}
        f\left(\gamma_k\left(\delta^q\right)\right) > f(x_k) + \sigma\delta^q\nabla f(x_k)^\top d_k \quad \text{ or } \quad \gamma_k(\delta^q) \not \in \Omega.
    \end{equation}
    We now consider three possible subsequences, one at a time.
    \begin{enumerate}
        \item[(i)] $\bar{K}\subseteq K$ where, for all $k \in \bar{K}$, $\gamma_k$ is a curve (line \ref{line::restore} of Algorithm \ref{alg::CSM_constrained} is thus not executed) and, for all $k \in \bar{K}$ sufficiently large,
        \begin{equation*}
            \label{eq::first_cond}
            f\left(\gamma_k\left(\delta^q\right)\right) > f(x_k) + \sigma\delta^q\nabla f(x_k)^\top d_k.
        \end{equation*}
        Since $f$ and $\gamma_k$ (Assumption \ref{ass::defcurve}) are continuously differentiable, recalling that $\gamma_k(0) = x_k$, by the Mean Value Theorem we know that, for all $k$,
        \begin{equation*}
            \label{eq::mvt}
            f\left(\gamma_k\left(\delta^q\right)\right) = f(x_k) + \delta^q\nabla f(\gamma_k(c_k\delta^q))^\top\gamma_k'(c_k\delta^q),
        \end{equation*}
        with $c_k \in (0, 1)$.
        Combining then the last two equations, we obtain that, for all $k\in\bar{K}$ sufficiently large,
        \begin{equation}
            \label{eq::first_with_mvt}
            \nabla f(\gamma_k(c_k\delta^q))^\top\gamma_k'(c_k\delta^q) > \sigma\nabla f(x_k)^\top d_k.
        \end{equation}
        
        Since $\{\gamma_k\}$ is gradient-related to $\{x_k\}$, Assumption \ref{ass::xi} holds for $\{s_k\}$ and $c_k \in (0, 1)$ for all $k$, the subsequences $\{d_k\}_{\bar{K}}$, $\{s_k\}_{\bar{K}}$ and $\{c_k\}_{\bar{K}}$ are bounded; thus, there exists $\bar{K}_1\subseteq \bar{K}$ such that $d_k \to \bar{d}$, $s_k \to \bar{s}$ and $c_k \to \bar{c} \in [0, 1]$ for $k \in \bar{K_1}$, $k \to \infty$. 
        Since $\gamma_k$ is continuous, we then have
        \begin{equation*}
            \lim_{\substack{k\in \bar{K}_1\\k\to\infty}}\gamma_k(c_k\delta^q) = \lim_{\substack{k\in \bar{K}_1\\k\to\infty}}\gamma(c_k\delta^q;x_k,d_k,s_k) = \gamma(\bar{c}\delta^q;\bar{x},\bar{d},\bar{s}) = \bar{\gamma}(\bar{c}\delta^q).
        \end{equation*}
        Similarly, it follows by the continuous differentiability of $\gamma_k$ that 
        \begin{equation*}
            \lim_{\substack{k\in \bar{K}_1\\k\to\infty}}\gamma'_k(c_k\delta^q) = \gamma'(\bar{c}\delta^q;\bar{x},\bar{d},\bar{s}) = \bar{\gamma}'(\bar{c}\delta^q).
        \end{equation*}
        Given that $f$ is continuously differentiable, we can now take the limit for $k \in \bar{K}_1$, $k \to \infty$ in equation \eqref{eq::first_with_mvt} to obtain 
        \begin{equation*}
            \nabla f(\bar{\gamma}(\bar{c}\delta^q))^\top\bar{\gamma}'(\bar{c}\delta^q) \ge \sigma\nabla f(\bar{x})^\top \bar{d}.    
        \end{equation*}
        The previous inequality holds for any $q\in\mathbb{N}$; thus, we can further take the limit for $q\to\infty$ and get that 
        \begin{equation*}
            \nabla f(\bar{\gamma}(0))^\top \bar{\gamma}'(0) = \nabla f(\bar{x})^\top \bar{d} \geq \sigma \nabla f(\bar{x})^\top\bar{d},
        \end{equation*}
        that is, $(1-\sigma)\nabla f(\bar{x})^\top \bar{d}\geq0$. Given $\sigma \in (0, 1)$, the last result implies $\nabla f(\bar x)^\top \bar{d}\ge0$. Since $\{\gamma_k\}$ is gradient-related to $\{x_k\}$ (Definition~\ref{def::gradient-related}), we finally have that 
        \begin{equation*}
            0 \le \nabla f(\bar x)^\top \bar{d} = \nabla f(\bar x)^\top \bar{\gamma}'(0) \le \limsup_{\substack{k \in \bar{K}_1\\k \to \infty}}\nabla f(x_k)^\top \gamma_k'(0) < 0,
        \end{equation*}
        which is a contradiction.

        \item[(ii)] $\tilde{K}\subseteq K$ where, for all $k \in \tilde{K}$, $\gamma_k$ is a straight line (line \ref{line::restore} of Algorithm \ref{alg::CSM_constrained} is then executed). Since Assumption \ref{ass::dk_feas} holds and $\delta \in (0, 1)$, we have that, for all $k \in \tilde{K}$ sufficiently large, $\gamma_k(\delta^q) = x_k + \delta^qd_k \in \Omega$. Hence, the first condition in \eqref{eq::armijo_unsatisfied} holds and the argument from case (i) applies, leading to a contradiction.
        
        \item[(iii)] $\hat{K}\subseteq K$ where, for all $k \in \hat{K}$, $\gamma_k$ is a curve (line \ref{line::restore} of Algorithm \ref{alg::CSM_constrained} is thus not executed) and, for all $k \in \hat{K}$ sufficiently large, $\gamma_k\left(\delta^q\right) \not \in \Omega$, i.e.,
        \begin{equation*}
            \exists i_k \in [m]: g_{i_k}(\gamma_k(\delta^q)) > 0.
        \end{equation*}
        Since the set $[m]$ is finite, we can consider a subsequence $\hat{K}_1 \subseteq \hat{K}$ such that, for all $k \in \hat{K}_1$ sufficiently large, $i_k = \hat{i}$ and 
        \begin{equation}
            \label{eq::g_before_lim}
            g_{\hat{i}}(\gamma_k(\delta^q)) > 0.
        \end{equation}
        It thus follows from equation \eqref{eq::xkL0}, Assumption \ref{ass::dk_feas}, the last result and Lemma \ref{lemma::CHinfeas} that 
        \begin{equation}
            \label{eq::g_q0}
            g_{\hat{i}}(x_k+s_k) > 0.
        \end{equation}
        By an argument similar to that in case (i), we can consider a subsequence $\hat{K}_2 \subseteq \hat{K}_1$ such that $d_k \to \hat{d}$, $s_k \to \hat{s}$ for $k \in \hat{K}_2$, $k \to \infty$, and
        \begin{equation*}
            \begin{gathered}
                \lim_{\substack{k\in \hat{K}_2\\k\to\infty}}\gamma_k(\delta^q) = \lim_{\substack{k\in \hat{K}_2\\k\to\infty}}\gamma(\delta^q;x_k,d_k,s_k) = \gamma(\delta^q;\bar{x},\hat{d},\hat{s}) = \hat{\gamma}(\delta^q),\\
                \lim_{\substack{k\in \hat{K}_2\\k\to\infty}}\gamma'_k(\delta^q) = \gamma'(\delta^q;\bar{x},\hat{d},\hat{s}) = \hat{\gamma}'(\delta^q).
            \end{gathered}
        \end{equation*}
        Since $g$ is continuous, we can take the limit for $k \in \hat{K}_2$, $k \to \infty$, in equation \eqref{eq::g_before_lim} to obtain
        \begin{equation}
            \label{eq::g_forallq}
            g_{\hat{i}}(\hat{\gamma}(\delta^q)) \ge 0.
        \end{equation}
        The previous inequality holds for any $q \in \mathbb{N}$; thus, we can further take the limit for $q \to \infty$ and get $g_{\hat{i}}(\hat{\gamma}(0)) \ge 0$. Since, by equation \eqref{eq::xkL0}, $\hat{\gamma}(0) = \bar{x} \in \mathcal{L}_0 \subseteq \Omega$, it must hold that 
        \begin{equation}
            \label{eq::gbarx}
            g_{\hat{i}}(\hat{\gamma}(0)) = g_{\hat{i}}(\bar{x}) = 0.
        \end{equation}

        Let us now define the composite function $h_{\hat{i}}: \mathbb{R}\to\mathbb{R}$ as $h_{\hat{i}}(t)=g_{\hat{i}}(\hat{\gamma}(t))$; since $g_{\hat{i}}$ and $\hat{\gamma}$ (Assumption \ref{ass::defcurve}) are continuously differentiable, $h_{\hat{i}}$ is also continuously differentiable. By equations \eqref{eq::g_forallq}-\eqref{eq::gbarx}, we then deduce that
        \begin{equation}
            \label{eq::g_bari_inequality}
            0 \le \lim_{q\to\infty}\frac{h_{\hat{i}}(\delta^q) - h_{\hat{i}}(0)}{\delta^q} = h_{\hat{i}}'(0) = \nabla g_{\hat{i}}(\hat{\gamma}(0))^\top\hat{\gamma}'(0) = \nabla g_{\hat{i}}(\bar{x})^\top\hat{d}.
        \end{equation}
        By convexity and continuous differentiability of $g$, and equations \eqref{eq::gbarx}-\eqref{eq::g_bari_inequality}, we thus obtain that 
        \begin{equation*}
            g_{\hat{i}}(\bar{x} + \tilde{t}\hat{d}) \ge g_{\hat{i}}(\bar{x}) + \tilde{t}\nabla g_{\hat{i}}(\bar{x})^\top \hat{d} \ge 0,
        \end{equation*}
        with $\tilde{t} \in (0, 1)$ defined as in Algorithm \ref{alg::CSM_constrained}.
        
        Since $g_{\hat{i}}$ is continuous and $\{\varepsilon_k\} \subseteq \mathbb{R}_+$ is a decreasing sequence in Algorithm~\ref{alg::CSM_constrained}, it follows that, for all $k \in \hat{K}_2$ sufficiently large, $g_{\hat{i}}(x_k + \tilde{t} d_k) \ge -\varepsilon_k$, which implies that $\hat{i} \in \tilde{I}_k$. Recalling that equation~\eqref{eq::g_q0} holds for all $k \in \hat{K}_1$ sufficiently large, the first condition in the \emph{if} clause of line~\ref{line::if_restore} is then satisfied for all $k \in \hat{K}_2$ sufficiently large. Consequently, line~\ref{line::restore} would be executed. This leads to a contradiction since, by definition of the subsequence $\hat{K}$, line~\ref{line::restore} is not executed at any iteration $k \in \hat{K}$.
    \end{enumerate}
    
    We obtain a contradiction in all the three cases. Thus, the proof is complete.
\end{proof}

\section{Further Developments of the Curve Search Method for Constrained Optimization}
\label{sec::extensions}

In this section, we discuss the integration of additional techniques into the curve search method proposed in Algorithm~\ref{alg::CSM_constrained}, one at a time. Note that for each of these techniques we also describe the modifications required in Algorithm~\ref{alg::CSM_constrained} in order to incorporate them. For completeness, the full scheme of the method with all these updates included is reported in Appendix~\ref{app::extended_scheme}.

\subsection{Non-monotone Curve Search}

Similarly to what is done in \cite{donnini2025efficientglobalizationheavyballtype}, Algorithm \ref{alg::CSM_constrained} can integrate a non-monotone version of the Armijo-type curve search used in line \ref{line::armijo}. In the non-monotone version we require that the stepsize $t_k$ must satisfy the following two conditions:
\begin{equation}\label{eq::nm-fcAR}
    \gamma_k(t_k)\in\Omega \quad \text{ and } \quad f(\gamma_k(t_k))\leq\max_{0\leq j\leq m(k)} f(x_{k-j})+\sigma t_k\nabla f(x_k)^\top d_k,
\end{equation}
where $m(0)=0$, $0\leq m(k)\leq\min\{m(k-1)+1,M\}$ for $k\geq1$ and $M\in\mathbb{N}$. Note that, even if we modify the sufficient decrease condition, we still require the point $\gamma_k(t_k)$ to be feasible at each iteration.

The finite termination (Proposition \ref{prop::ACSprop}) and convergence (Proposition \ref{prop:globalconvm}) results can be adapted as follows taking into account the use of the non-monotone curve search.

\begin{proposition}
    Let $x_k \in \Omega$ and $\gamma_k$ be a gradient-related curve of the form \eqref{eq::quadratic_gamma} satisfying Assumptions \ref{ass::defcurve}-\ref{ass::dk_feas}. Then, the non-monotone Armijo-type curve search procedure in equation \eqref{eq::nm-fcAR} terminates in a finite number of iterations returning a valid stepsize $t_k > 0$. Moreover, one of the two following conditions holds:
    \begin{enumerate}
        \item[(i)] $t_k=1$;
        \item[(ii)] if $t_k<1$, then $\gamma_k(\frac{t_k}{\delta}) \not \in \Omega$ or $f(\gamma_k(\frac{t_k}{\delta}))>\max_{0\leq j\leq m(k)} f(x_{k-j})+\sigma \frac{t_k}{\delta}\nabla f(x_k)^\top d_k$.
    \end{enumerate}
\end{proposition}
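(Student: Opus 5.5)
The plan is to reduce the non-monotone case to the monotone one. The key observation is that the non-monotone reference value dominates the current function value: since $j=0$ is always admissible in the maximum,
\begin{equation*}
R_k := \max_{0\leq j\leq m(k)} f(x_{k-j}) \ge f(x_k).
\end{equation*}
Hence any $t$ that satisfies both the feasibility and the monotone sufficient decrease conditions in line~\ref{line::armijo} of Algorithm~\ref{alg::CSM_constrained} also satisfies both conditions in \eqref{eq::nm-fcAR}. The acceptance set of the non-monotone search therefore contains that of the monotone search.

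Concretely, I would follow the proof of Proposition~\ref{prop::ACSprop} and split according to whether $s_k$ is overwritten in line~\ref{line::restore}.

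If $s_k=d_k$, the curve is the segment $x_k+td_k$. By Assumption~\ref{ass::dk_feas} every $t\in(0,1]$ is feasible. Standard finite termination of the monotone Armijo line search then gives some $\hat t_k>0$ such that the monotone condition, and hence the non-monotone one, holds for all $t\in(0,\hat t_k]$.

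Otherwise, the feasibility test in line~\ref{line::if_restore} passed, so the hypotheses of Proposition~\ref{prop::feasibility} hold and $\gamma_k$ is a feasible curve. This yields $\tilde t_k=\delta^{\tilde q}$ with $\gamma_k(t)\in\Omega$ for all $t\in[0,\tilde t_k]$. In parallel, \cite[Proposition 1]{donnini2025efficientglobalizationheavyballtype} gives $\hat t_k>0$ with the monotone decrease valid on $(0,\hat t_k]$. This rests on $\nabla f(x_k)^\top d_k<0$ together with a first-order expansion of $f(\gamma_k(t))$ around $t=0$, using $\gamma_k'(0)=d_k$. By the observation above, the non-monotone decrease also holds on $(0,\hat t_k]$.

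Taking $\min\{\hat t_k,\tilde t_k\}$, both conditions in \eqref{eq::nm-fcAR} hold on a whole interval $(0,\bar t]$. The backtracking sequence $\delta^h$ enters this interval after finitely many steps, so the search stops at a valid $t_k>0$.

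For the dichotomy, if no backtracking occurs then $t_k=1$, which is (i). If backtracking occurs, then $t_k/\delta$ was the last trial rejected. Since $t_k$ is the maximal accepted power of $\delta$, the trial $t_k/\delta$ failed at least one of the two conditions in \eqref{eq::nm-fcAR}. This is exactly (ii), with the monotone reference value replaced by $\max_{0\le j\le m(k)} f(x_{k-j})$.

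The main obstacle is the feasibility part in the curve case. It relies entirely on Proposition~\ref{prop::feasibility}, whose hypothesis must be checked against the test in line~\ref{line::if_restore}. That test uses the enlarged set $\{i : g_i(x_k+\tilde t d_k)\ge-\varepsilon_k\}$, which contains the exactly active set at $\tilde x_k$. So passing the test implies the hypothesis of Proposition~\ref{prop::feasibility}, and feasibility is unaffected by the non-monotone modification. Everything else transfers verbatim from the proof of Proposition~\ref{prop::ACSprop}.
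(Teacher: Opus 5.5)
Your proposal is correct and has the same structure as the paper's proof. That proof reruns Proposition~\ref{prop::ACSprop}: split on whether $s_k$ is overwritten, intersect the feasibility interval from Proposition~\ref{prop::feasibility} with the sufficient-decrease interval, and use the last rejected trial for (ii), citing the non-monotone results \cite[Section 24.4]{grippo2023introduction} and \cite[Proposition 3]{donnini2025efficientglobalizationheavyballtype} in place of the monotone ones. Your observation that $\max_{0\le j\le m(k)} f(x_{k-j})\ge f(x_k)$ (so the non-monotone acceptance set contains the monotone one) derives those non-monotone ingredients directly from the monotone results, and your check that the $\varepsilon_k$-enlarged set in line~\ref{line::Ik} contains the exactly active set makes explicit a step the paper leaves implicit.
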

\begin{proof}
    The result follows as in Proposition \ref{prop::ACSprop}, using \cite[Section 20.2]{grippo2023introduction} together with \cite[Section 24.4]{grippo2023introduction}, and \cite[Proposition 3]{donnini2025efficientglobalizationheavyballtype} instead of \cite[Proposition 1]{donnini2025efficientglobalizationheavyballtype}.
\end{proof}

\begin{proposition}
    Let Assumption \ref{ass::L0} hold. Let $\{(x_k,\gamma_k)\}$ be the sequence generated by Algorithm \ref{alg::CSM_constrained} such that all the hypotheses of Proposition \ref{prop:globalconvm} hold and, for all $k$, stepsizes $t_k$ are chosen by the non-monotone Armijo-type curve search in equation \eqref{eq::nm-fcAR}.
    Then, the sequence $\{x_k\}$ admits accumulation points and there exists at least one accumulation point $\bar{x}$ which is stationary for problem \eqref{eq::con-min}.
\end{proposition}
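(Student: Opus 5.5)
The argument combines the Grippo--Lampariello--Lucidi non-monotone machinery with the three-case analysis of Proposition~\ref{prop:globalconvm}.

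\textbf{Step 1: the iterates stay in the level set.} Feasibility of $\{x_k\}$ follows exactly as in Proposition~\ref{prop::feasibility2}, since \eqref{eq::nm-fcAR} still imposes $\gamma_k(t_k)\in\Omega$. Let $\ell(k)$ be an index with $f(x_{\ell(k)})=\max_{0\le j\le m(k)} f(x_{k-j})$. Condition \eqref{eq::nm-fcAR}, together with $\nabla f(x_k)^\top d_k<0$, gives $f(x_{k+1})\le f(x_{\ell(k)})$. Since $m(k+1)\le m(k)+1$, the sequence $\{f(x_{\ell(k)})\}$ is monotonically nonincreasing. Hence $\{x_k\}\subseteq\mathcal{L}_0$, which is compact by Assumption~\ref{ass::L0}. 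So accumulation points exist, and $f(x_{\ell(k)})\to f^\star$.

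\textbf{Step 2: the standard non-monotone argument.} Writing \eqref{eq::nm-fcAR} at iteration $\ell(k)-1$ gives
\[
f(x_{\ell(k)})\le f(x_{\ell(\ell(k)-1)})+\sigma t_{\ell(k)-1}\nabla f(x_{\ell(k)-1})^\top d_{\ell(k)-1}.
\]
Passing to the limit yields $t_{\ell(k)-1}\nabla f(x_{\ell(k)-1})^\top d_{\ell(k)-1}\to0$.

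Boundedness of $\{d_k\}$ and $\{s_k\}$ (Assumption~\ref{ass::xi}) makes $\|x_{k+1}-x_k\|=\|t_kd_k+t_k^2(s_k-d_k)\|$ controllable by $t_k$. Here lies the first difficulty. We need $t_k\|d_k\|\to0$ along the relevant indices, not merely $t_k\nabla f(x_k)^\top d_k\to0$. To obtain it, we assume (as is standard, e.g.\ for projected-gradient $d_k$) a uniform inequality of the form $\nabla f(x_k)^\top d_k\le -c\|d_k\|^2$ on the levels considered.

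Under that inequality, the induction over $j$ from Grippo et al.\ shows $\|x_{\ell(k)-j}-x_{\ell(k)}\|\to0$ and $f(x_{\ell(k)-j})\to f^\star$ for each $j\le M+1$. This gives $\|x_{k+1}-x_k\|\to0$ and $\lim_k f(x_k)=f^\star$.

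Failing that inequality, there is a fallback that is exactly what the weaker statement (``at least one accumulation point'') allows. Take any accumulation point $\bar x$ of the subsequence $\{x_{\ell(k)-1}\}$, and work along that subsequence $K$, on which $t_k\nabla f(x_k)^\top d_k\to0$ already holds.

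\textbf{Step 3: contradiction along $K$.} Suppose $\bar x$ is not stationary. Then gradient-relatedness (Definition~\ref{def::gradient-related}) forces $t_k\to0$ on $K$. By the non-monotone version of Proposition~\ref{prop::ACSprop}, for each fixed $q$ and large $k\in K$, either $\gamma_k(\delta^q)\notin\Omega$ or
\[
f(\gamma_k(\delta^q))>\max_j f(x_{k-j})+\sigma\delta^q\nabla f(x_k)^\top d_k\ge f(x_k)+\sigma\delta^q\nabla f(x_k)^\top d_k.
\]
The second alternative thus implies the monotone failure condition \eqref{eq::armijo_unsatisfied}. From here the three cases (i)--(iii) in the proof of Proposition~\ref{prop:globalconvm} apply verbatim:
\begin{itemize}
\item In case (i), the mean value theorem leads to $\nabla f(\bar x)^\top\bar d\ge0$.
\item In case (ii), the straight-line restoration step is always feasible, so case (i) applies.
\item In case (iii), Lemma~\ref{lemma::CHinfeas} and the vanishing tolerances $\varepsilon_k$ force line~\ref{line::restore} to execute, a contradiction.
\end{itemize}
Each case contradicts the non-stationarity of $\bar x$, so $\bar x$ is stationary.

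The main obstacle I expect is Step 2. Because non-monotone decrease controls $t_k\nabla f(x_k)^\top d_k$ only on the index subsequence $\ell(k)-1$, the proof must either restrict to accumulation points of that subsequence, which suffices for the ``at least one'' claim, or add an angle-type condition to propagate to all iterates. I would first try the restriction, since it needs no extra hypothesis.
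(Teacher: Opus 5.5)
Your fallback route is correct and is essentially the paper's proof. You restrict to an accumulation point of $\{x_{\ell(k)-1}\}$, where $t_k\nabla f(x_k)^\top d_k\to 0$ follows from the monotonicity of $\{f(x_{\ell(k)})\}$, note that failing the non-monotone test at $\delta^q$ implies failing the monotone one, and then rerun cases (i)--(iii) of Proposition~\ref{prop:globalconvm}; the paper simply defers to the non-monotone argument of the unconstrained reference with Proposition~\ref{prop:globalconvm} in place of its monotone counterpart. The angle condition $\nabla f(x_k)^\top d_k\le -c\|d_k\|^2$ you float in Step~2 is not among the hypotheses and is not needed for the ``at least one'' claim, so drop that branch.
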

\begin{proof}
    The thesis can be proven by reasoning similar to that used in the proof of \cite[Proposition 4]{donnini2025efficientglobalizationheavyballtype}, where Proposition \ref{prop:globalconvm} is used in place of Proposition 2 from the referenced paper.
\end{proof}

In \cite{donnini2025efficientglobalizationheavyballtype}, a stronger convergence result, ensuring that every accumulation point of the generated sequence is stationary, is established. This result required the introduction of an additional assumption on the sequence of curves generated by the method. We did not further investigate whether this result can also be extended to the constrained case considered here, as we believe that such an analysis lies somewhat outside the scope of the present paper. Nevertheless, we are confident that a stronger convergence result in the non-monotone case, similar to the one obtained in the monotone case, can be achieved, although not in a completely straightforward way. Such an investigation may represent a possible direction for future work.

\subsection{Spectral Curve Search Method}

Motivated by the state-of-the-art performance of the \texttt{SPG} method introduced in \cite{birgin2000nonmonotone} and briefly described in Section \ref{sec::preliminaries}, we propose the incorporation of a spectral gradient strategy into the definition of the curve $\gamma_k$ with the aim of enhancing the performance of our curve search method.

In particular, whenever the direction $d_k$ is computed as in \texttt{SPG} (see equation \eqref{eq::d_SPG}), we propose a spectral rescaling of the momentum term in the second direction $s_k$:
\begin{equation*}
    s_k=\alpha d_k+\beta\eta_k(x_k-x_{k-1}).
\end{equation*}

Note that, when $d_k$ is the \texttt{SPG} direction, this rescaling of the momentum term $\beta(x_k-x_{k-1})$ by the ``inverse Rayleigh quotient'' $\eta_k$ also helps prevent the directions $d_k$ and $s_k$ from having significantly different magnitudes.  This would lead to \emph{degenerate} quadratic curves
\eqref{eq::quadratic_gamma} which collapse onto one of the two control directions, $d_k$ or $s_k$.

Note that, as already mentioned in Section \ref{sec::preliminaries}, the direction used in \texttt{SPG} is a projected gradient–type direction and is therefore gradient-related. It is easy to verify that all the other assumptions used to establish the theoretical results for the proposed curve search method remain valid also with these definitions of $d_k$ and $s_k$.

\subsection{Adaptive Momentum Strategy}
\label{subsec::adpative_momentum_strategy}

Degenerate quadratic curves may arise when the computation of projected gradient-type direction $d_k$ requires an actual projection. In such cases, the momentum term can make $s_k$ significantly larger in magnitude than $d_k$. This is particularly likely to happen as the iterates approach the boundary of the feasible set.

To mitigate this issue, we propose an \textit{adaptive momentum} strategy, following a similar reasoning as in \cite{fan2023msl}, in which the parameter $\beta$ is iteratively reduced until the resulting direction $s_k$ becomes feasible. The procedure is outlined in Algorithm \ref{alg::lines7-8}, which shows how lines \ref{line::if_restore}-\ref{line::restore} of Algorithm \ref{alg::CSM_constrained} should be modified to incorporate this strategy.

\begin{algorithm}[ht]\caption{Lines \ref{line::if_restore}-\ref{line::restore} of Algorithm \ref{alg::CSM_constrained} for adaptive momentum strategy}
    \label{alg::lines7-8}
	\begin{algorithmic}[1]
        \IF{$\exists i\in \tilde{I}_k: g_i(x_k+s_k) > 0$ \textbf{or} $k = 0$\label{line::if-restore-2}}
        \STATE Set $s_k = d_k$ 
        \ELSE
        \IF{projection was required to define $d_k$}
        \STATE Find $\beta_k = \max_{h \in \mathbb{N}}\{\delta^h\beta \mid x_k + \alpha d_k + \delta^h\beta(x_k - x_{k-1}) \in \Omega\}$\label{line::reduce_beta}
        \STATE Set $s_k = \alpha d_k + \beta_k(x_k - x_{k-1})$
        \ENDIF
        \ENDIF
	\end{algorithmic}
\end{algorithm}

Note that in line \ref{line::reduce_beta} of Algorithm \ref{alg::lines7-8} we have used the same decay factor $\delta$ used for the stepsize $t_k$ in line \ref{line::armijo} of Algorithm \ref{alg::CSM_constrained}; of course, a different choice for the decay factor could have been employed. 

In the following lemma, we state that, under a reasonable assumption on the choice of the momentum parameter $\alpha$, the adaptive reduction of $\beta$ terminates in a finite number of steps and, consequently, the resulting overall Algorithm \ref{alg::CSM_constrained} remains well-defined. The proof of the lemma is provided in Appendix \ref{app::proof}.

\begin{lemma}
    \label{lemma::reduction_beta}
    Let $k > 0$ be an iteration of Algorithm \ref{alg::CSM_constrained} where $\alpha \in (0, 1)$, $\gamma_k$ is a gradient-related curve satisfying Assumption \ref{ass::dk_feas} and projection was required to define $d_k$. Then, line \ref{line::reduce_beta} of Algorithm \ref{alg::lines7-8} is well-defined, i.e., it terminates after a finite number of iterations returning a valid $\beta_k > 0$. 
\end{lemma}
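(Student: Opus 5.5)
Since $\alpha \in (0,1)$ and Assumption~\ref{ass::dk_feas} gives $x_k + d_k \in \Omega$, convexity of $\Omega$ together with $x_k \in \Omega$ (Proposition~\ref{prop::feasibility2}) yields
\begin{equation*}
    x_k + \alpha d_k = (1-\alpha)x_k + \alpha(x_k + d_k) \in \Omega .
\end{equation*}
Write $y_k = x_k + \alpha d_k$ and $r_k = x_k - x_{k-1}$. The test in line~\ref{line::reduce_beta} asks whether $y_k + \delta^h\beta r_k \in \Omega$. As $h \to \infty$ the tested points converge to $y_k \in \Omega$, but that alone does not give membership for some finite $h$, since $y_k$ may lie on the boundary. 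The plan is therefore to show that $y_k$ lies in the \emph{interior} of $\Omega$, or at least in the interior relative to the constraints that matter. Then continuity of $g$ gives some $\bar\eta > 0$ such that $y_k + \eta r_k \in \Omega$ for all $\eta \in [0,\bar\eta]$. Taking $h$ with $\delta^h\beta \le \bar\eta$ shows that the set in line~\ref{line::reduce_beta} is nonempty, so the maximum over $h \in \mathbb{N}$ is attained at a finite $h$ and $\beta_k = \delta^h\beta > 0$.

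The key step is to exploit the hypothesis that ``projection was required to define $d_k$''. I read this as $x_k + d_k = \Pi_\Omega[z_k]$ with $z_k = x_k - \eta\nabla f(x_k) \notin \Omega$, so that $x_k + d_k$ lies on the boundary of $\Omega$. This alone is not enough to conclude that $y_k$ is interior, because a segment between two boundary points of a polyhedron can lie entirely on a face. I therefore split the constraints into two groups.
\begin{itemize}
    \item \emph{Inactive constraints.} For each $i$ with $g_i(y_k) < 0$, continuity gives a neighborhood of $y_k$ on which $g_i < 0$. This gives $\bar\eta_i > 0$.
    \item \emph{Active constraints.} For each $i$ with $g_i(y_k) = 0$, convexity of $g_i$ along the segment $[x_k, x_k + d_k]$ forces $g_i(x_k) = g_i(x_k + d_k) = 0$, so $g_i$ vanishes on the whole segment. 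One then needs $\nabla g_i(y_k)^\top r_k < 0$, or $g_i$ nonpositive along the ray $y_k + \eta r_k$, for small $\eta$.
\end{itemize}

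For the active constraints I would try to use two facts. First, $x_{k-1} \in \Omega$ by Proposition~\ref{prop::feasibility2}, so $-r_k = x_{k-1} - x_k$ points from $x_k$ into $\Omega$. Second, because the \emph{if} clause at line~\ref{line::if-restore-2} failed, $g_i(x_k + s_k) \le 0$ for every $i \in \tilde I_k$. An index active at $y_k$ with $\tilde t$ near $\alpha$ should lie in $\tilde I_k$, which would give $g_i(y_k + \beta r_k) \le 0$. Convexity of $g_i$ on the segment from $y_k$ to $y_k + \beta r_k$, whose endpoints are both feasible for $g_i$, would then show $g_i(y_k + \eta\beta r_k) \le 0$ for all $\eta \in [0,1]$. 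Combining the two groups by taking the minimum of the finitely many $\bar\eta_i$ would finish the proof.

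The main obstacle is this active-constraint case, because $\tilde I_k$ is defined at $x_k + \tilde t d_k$ rather than at $x_k + \alpha d_k$. I would bridge the gap with the observation above: if $g_i(y_k) = 0$ with $\alpha \in (0,1)$, then $g_i$ vanishes on the whole segment $[x_k, x_k + d_k]$. In particular $g_i(x_k + \tilde t d_k) = 0 \ge -\varepsilon_k$, so $i \in \tilde I_k$ for any $\tilde t \in (0,1)$. This makes the argument independent of the particular projection structure, and the projection hypothesis serves only to guarantee that the branch is entered meaningfully.
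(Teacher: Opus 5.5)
Your proposal is correct and uses the same ingredients as the paper's proof. These are: feasibility of $x_k+\alpha d_k$ by convexity; the failed \emph{if} test combined with convexity of $g_i$ on the segment from $x_k+\alpha d_k$ to $x_k+s_k$ for indices in $\tilde I_k$; and the observation that a constraint vanishing at $x_k+\alpha d_k$ vanishes on all of $[x_k,x_k+d_k]$ and hence lies in $\tilde I_k$. The only difference is organizational: you split the constraints by activity at $x_k+\alpha d_k$ and argue directly by continuity, whereas the paper splits into $\tilde I_k$ and its complement and handles the complement by a contradiction/subsequence argument.
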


\section{Computational Experiments}
\label{sec::experiments}

In this section, we report the results of numerical experiments in order to validate the goodness and consistency of our approach in smooth convexly constrained optimization contexts. The code was written in \texttt{Python3}\footnote{The implementation of the proposed approach is available at \url{https://github.com/pierlumanzu/constrained_scs}.}. All experiments were conducted on a computer with the following specifications: Ubuntu 24.04 OS, Intel(R) Xeon(R) Gold 6330N CPU @ 2.20GHz and 125 GB of RAM.

\subsection{Experimental Settings}
\label{subsec::settings}
To the best of our knowledge,  the spectral projected gradient method \texttt{SPG}, introduced in \cite{birgin2000nonmonotone, birgin01} and briefly described in Section~\ref{sec::preliminaries}, represents the state-of-the-art for solving smooth convexly constrained optimization problems in which the Euclidean projection is computationally tractable. For this reason, we proposed a comparison between \texttt{SPG} and our approach outlined in Algorithm~\ref{alg::CSM_constrained}, with $d_k$ being the \texttt{SPG} direction (see equation \eqref{eq::d_SPG}) and all the techniques presented in Section~\ref{sec::extensions} being incorporated. In the remainder of the section, we refer to our approach as \texttt{SCS}. 

For both approaches, we considered the monotone and non-monotone variants, with the latter using parameter $M = 10$, as in \cite{birgin2000nonmonotone} for testing \texttt{SPG}. Based on preliminary tuning experiments, not reported here for brevity, we set the algorithmic parameters as follows: $\delta = 0.5$, $\sigma = 10^{-7}$, $\alpha = 0.999$, $\eta_{\min} = 10^{-3}$, $\eta_{\max} = 10^3$, $\varepsilon_0=0.1$ and $\varepsilon_{k} = 0.95 \varepsilon_{k-1}$ for $k > 0$. As for the remaining parameters of \texttt{SPG}, including those related to the quadratic interpolation-based line search, they were set according to \cite{birgin2000nonmonotone, birgin01}. Regarding the adaptive momentum strategy (Section~\ref{subsec::adpative_momentum_strategy}), based on preliminary experiments, we adopted a dynamic update of the momentum parameter $\beta$. Specifically, starting from $\beta = 0.9$, we updated it depending on whether the adaptive momentum step was performed: if it was executed, then $\beta$ was set for the next iteration equal to the $\beta_k$ selected by the adaptive strategy (see Algorithm \ref{alg::lines7-8}); otherwise, we set for the next iteration $\beta = \min\{0.9, \frac{\beta}{\delta}\}$. Note that this update rule does not affect the convergence theory. We run the algorithms until one of the following stopping criteria was met: the current solution satisfied the stationarity condition with an approximation degree $\epsilon = 10^{-3}$; the number of iterations reached $5000$; a time limit of $2$ minutes was exceeded. If an algorithm met one of the last two criteria without satisfying the stationarity condition, it was considered a failure.

We conducted our experiments on constrained versions of the benchmark problems listed in Table~\ref{tab::problems}, drawn from the \texttt{CUTEst} collection \cite{gould2015cutest}. The problem dimension $n$ ranges from 2 to 5625.
\begin{table}[htb]
    \tbl{Benchmark of \texttt{CUTEst} problems.}
    {\begin{tabular}{lc} \toprule
    $n$ & \textit{Problem} \\ \midrule
    2 & JENSMP \\ \midrule
    3 & BARD, CHWIRUT1LS, ENGVAL2 \\ \midrule
    4 & ROSZMAN1LS \\ \midrule
    5 & MGH17LS \\ \midrule
    6 & FBRAIN3LS, HEART6LS \\ \midrule
    7 & CERI651ALS, CERI651BLS, CERI651ELS, HAHN1LS, PALMER1D \\ \midrule
    8 & GAUSS1LS, GAUSS3LS, HEART8LS, PALMER1C, PALMER4C, PALMER8C \\ \midrule
    11 & OSBORNEB \\ \midrule
    50 & CHNROSNB, ERRINRSM \\ \midrule
    66 & DIAMON2DLS, DMN15102LS, DMN37142LS \\ \midrule
    98 & LUKSAN12LS \\ \midrule
    99 & DIAMON3DLS, DMN15103LS, DMN15333LS, DMN37143LS \\ \midrule
    100 & LUKSAN11LS, LUKSAN22LS \\ \midrule
    102 & SPIN2LS \\ \midrule
    134 & COATING \\ \midrule
    200 & ARGLINA, ARGLINB, ARGTRIGLS \\ \midrule
    1000 & EXTROSNB, FLETCHCR, KSSLS \\ \midrule
    2550 & EIGENALS, EIGENBLS \\ \midrule
    2652 & EIGENCLS \\ \midrule
    4000 & CHAINWOO \\ \midrule
    5000 & CRAGGLVY, FLETCBV2, GENHUMPS, TESTQUAD, TRIDIA, VAREIGVL \\ \midrule
    5625 & FMINSURF \\ \bottomrule
    \end{tabular}}
    \label{tab::problems}
\end{table}
Each problem was tested with four different constraint configurations:

\begin{itemize}
    \item feasible set defined by a hyper-sphere constraint
    \begin{equation*}
        \Omega_{\text{Sph}} = \{x \in \mathbb{R}^n \mid \|x\|^2 - 100 \le 0\};
    \end{equation*}

    \item feasible set defined by a hyper-ellipsoid constraint
    \begin{equation*}
        \Omega_{\text{Ell}} = \{x \in \mathbb{R}^n \mid (x - c)^\top P^{-1} (x-c) - 25 \le 0\},
    \end{equation*}
    where $c = [1,\ldots,1]^\top$ and $P$ is a randomly drawn positive definite diagonal matrix;

    \item feasible set defined by hyper-sphere, hyper-plane and box constraints
    \begin{equation*}
        \begin{aligned}
            \Omega_{\text{Com}} & = \left\{x \in \mathbb{R}^n
            \;\middle\vert\;
            \begin{aligned}
                & \|x - c\|^2 - 100 \le 0, \\
                & w^\top x - 5 \le 0, \\
                & -5 \le x^i \le 10, \quad \forall i \in \{1,\ldots,n\}
            \end{aligned}
            \right\},
        \end{aligned}
    \end{equation*}
    where $c = [4,\ldots,4]^\top$ and $w = [\frac{1}{n}, \ldots, \frac{1}{n}]^\top$;

    \item feasible set with box constraints
    \begin{equation*}
        \Omega_{\text{Box}} = \{x \in \mathbb{R}^n \mid -1 \le x^i \le 1, \ \forall i \in \{1,\ldots,n\}\}.
    \end{equation*}
\end{itemize}
The projection onto $\Omega_{\text{Sph}}$ and $\Omega_{\text{Box}}$ can be computed efficiently in closed form, namely: $\Pi_{\Omega_{\text{Sph}}}[x] = \frac{10 x}{\|x\|}$ and $\Pi_{\Omega_{\text{Box}}}[x] = \hat{x}$ with $\hat{x}^i = \min\{1, \max\{-1, x^i\}\}$ for all $i \in \{1,\ldots,n\}$. For other constraint configurations, the auxiliary Euclidean projection problem is solved using the commercial solver Gurobi (version 12) \cite{gurobi}. For each problem, all algorithms were initialized with the same starting point provided by the \texttt{CUTEst} library and projected onto the considered feasible set.

To evaluate the performance of the algorithms, we used two metrics: the final objective function value $f^\star$ achieved by each method and the corresponding execution time $T$. For a concise visualization of the results, we employed performance profiles \cite{Dolan2002}. When computing these profiles, we included only the problem instances for which at least one algorithm successfully returned an (approximate) stationary point.

\subsection{Performance Overview}

In Figure \ref{fig::all}, we show the performance profiles obtained by the non-monotone and monotone versions of \texttt{SCS} and \texttt{SPG} on the problems listed in Table \ref{tab::problems}. We analyzed the performance of the algorithms for each of the tested constraint configurations described in Section \ref{subsec::settings}.

Regarding problem instances with hyper-spherical constraint ($\Omega_{\text{Sph}}$), \texttt{SCS} ($M=10$) and the monotone version of \texttt{SPG} exhibited very similar efficiency, and both significantly outperformed the standard non-monotone version of \texttt{SPG}. The monotone version of \texttt{SCS} was observed to be slightly less efficient than \texttt{SCS} ($M=10$) and monotone \texttt{SPG}; however, in terms of robustness, it ranked as the second best, surpassed only by its non-monotone counterpart. For problems with hyper-ellipsoidal constraint ($\Omega_{\text{Ell}}$) or multiple constraints ($\Omega_{\text{Com}}$), \texttt{SCS} ($M=10$) and \texttt{SCS} outperformed \texttt{SPG} ($M=10$) and \texttt{SPG}, respectively. In these settings, our method in its non-monotone version clearly emerged as the best in terms of computational time $T$. Finally, for problems with box constraints ($\Omega_{\text{Box}}$), the situation is less clear than in the previous cases: \texttt{SPG} algorithms appeared to be slightly superior, although our methods remained competitive overall.

\begin{figure}[!h]
    \centering
    \subfloat[16 problem instances with feasible set $\Omega_{\text{Sph}}$.]{\includegraphics[width=0.35\textwidth]{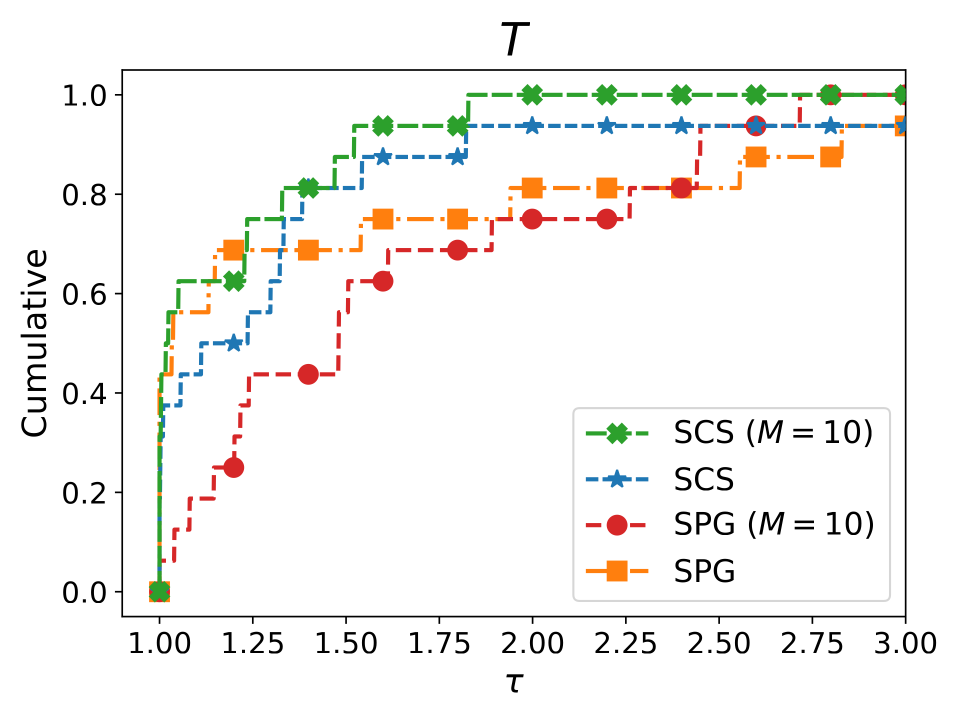}}
    \hfil
    \subfloat[36 problem instances with feasible set $\Omega_{\text{Ell}}$.]{\includegraphics[width=0.35\textwidth]{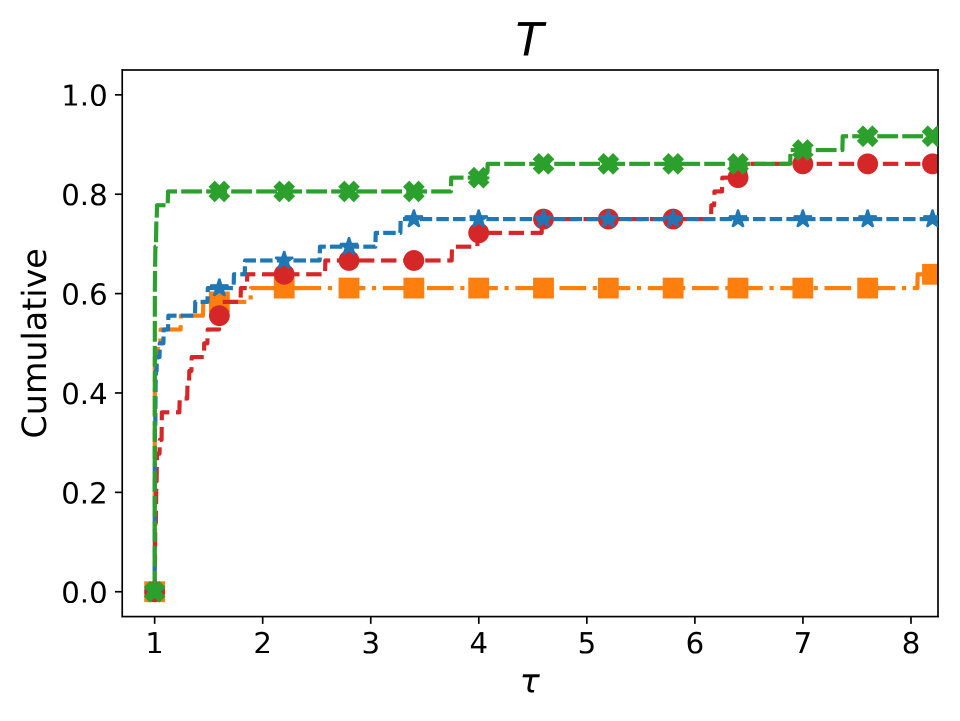}}
    \\
    \subfloat[50 problem instances with feasible set $\Omega_{\text{Com}}$.]{\includegraphics[width=0.35\textwidth]{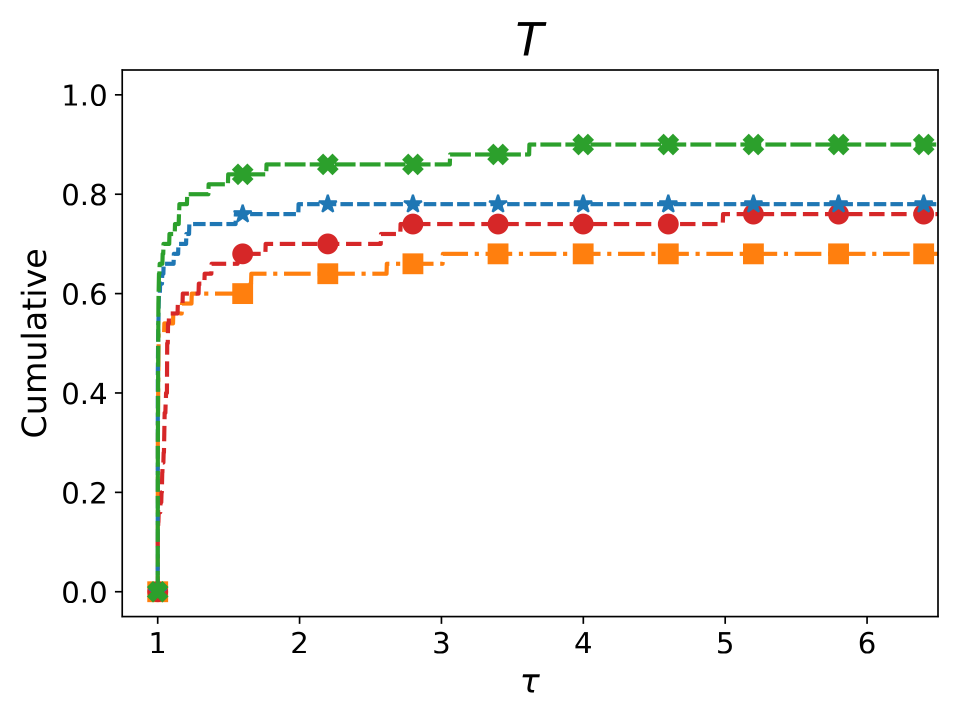}}
    \hfil
    \subfloat[35 problem instances with feasible set $\Omega_{\text{Box}}$.]{\includegraphics[width=0.35\textwidth]{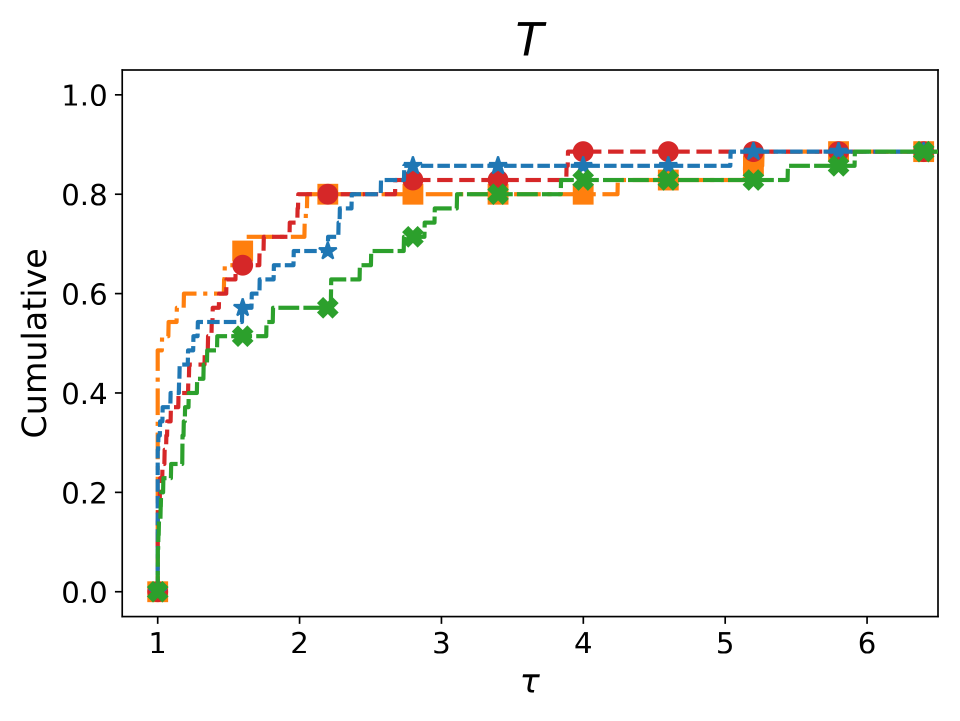}}
    \caption{Performance profiles obtained by \texttt{SCS} and \texttt{SPG} for both the non-monotone version with $M=10$ and the monotone one with respect to $T$ on the problems of Table \ref{tab::problems}. Note that the intervals of the x-axes were set to improve the visualization of the numerical results. For further details on the computation of the performance profiles, we refer the reader to Section~\ref{sec::experiments}.}
    \label{fig::all}
\end{figure}

Since we are dealing with a constrained setting, we found it of interest to perform the same analysis by considering only those problems whose solutions lie on the boundary of the feasible set. In Figure \ref{fig::active}, we therefore report the performance profiles obtained by the non-monotone and monotone versions of \texttt{SCS} and \texttt{SPG} on this subset of problems, again analyzing performance with respect to the type of constraints considered. As can be observed, conclusions very similar to the previous ones can be drawn. The only substantial difference is that the monotone version of \texttt{SCS} appears to be more robust in terms of $T$ than its non-monotone counterpart on the considered problems with hyper-ellipsoid constraint ($\Omega_{\text{Ell}}$).

\begin{figure}[!h]
    \centering
    \subfloat[14 problem instances with feasible set $\Omega_{\text{Sph}}$.]{\includegraphics[width=0.35\textwidth]{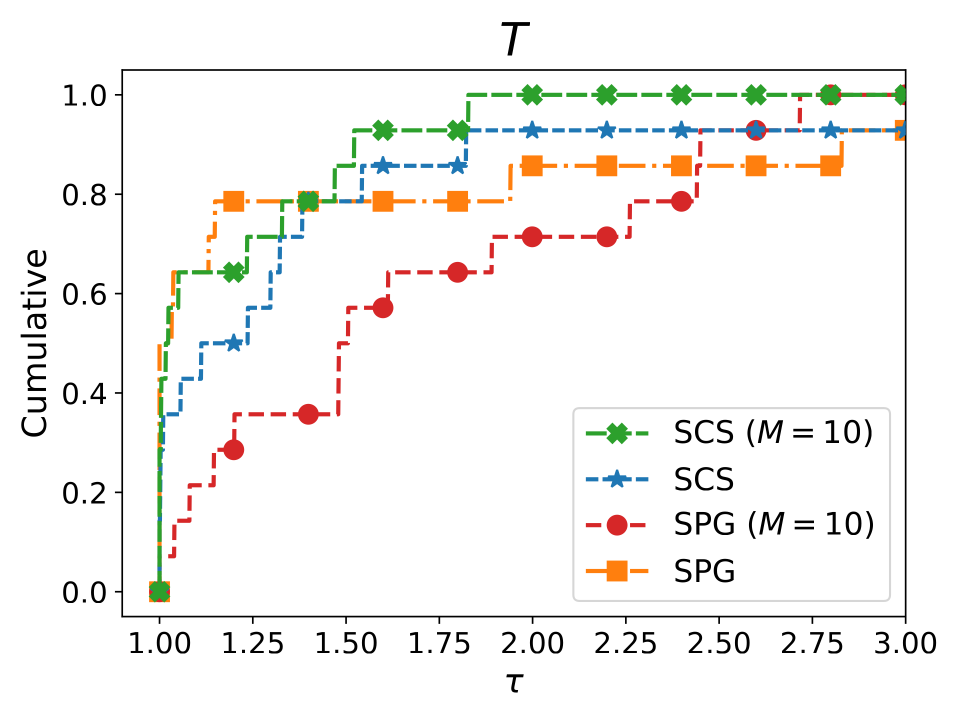}}
    \hfil
    \subfloat[15 problem instances with feasible set $\Omega_{\text{Ell}}$.]{\includegraphics[width=0.35\textwidth]{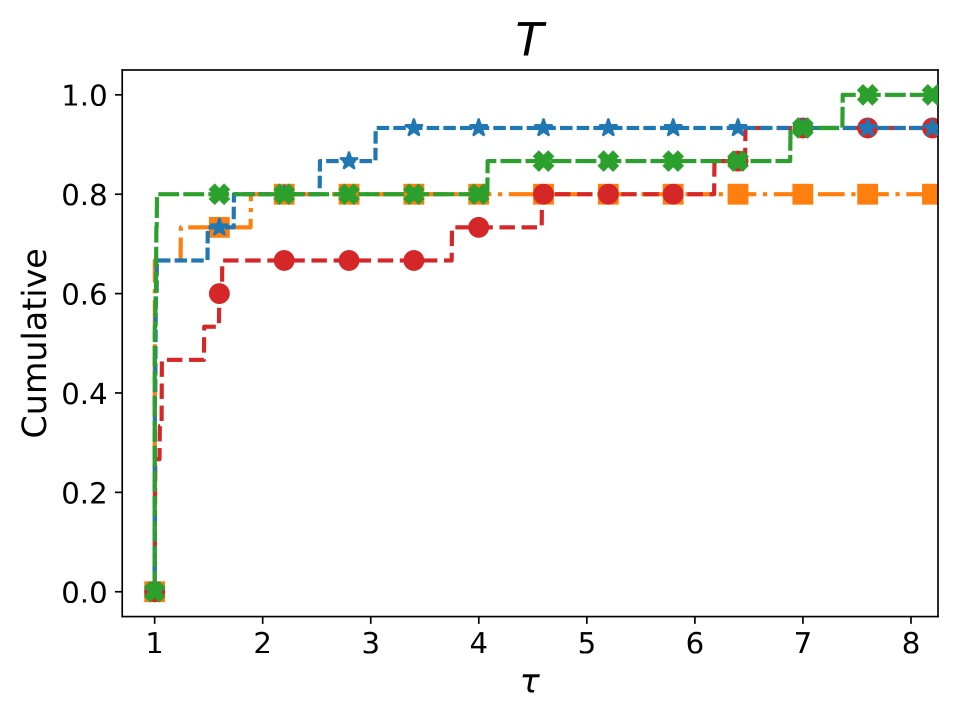}}
    \\
    \subfloat[12 problem instances with feasible set $\Omega_{\text{Com}}$.]{\includegraphics[width=0.35\textwidth]{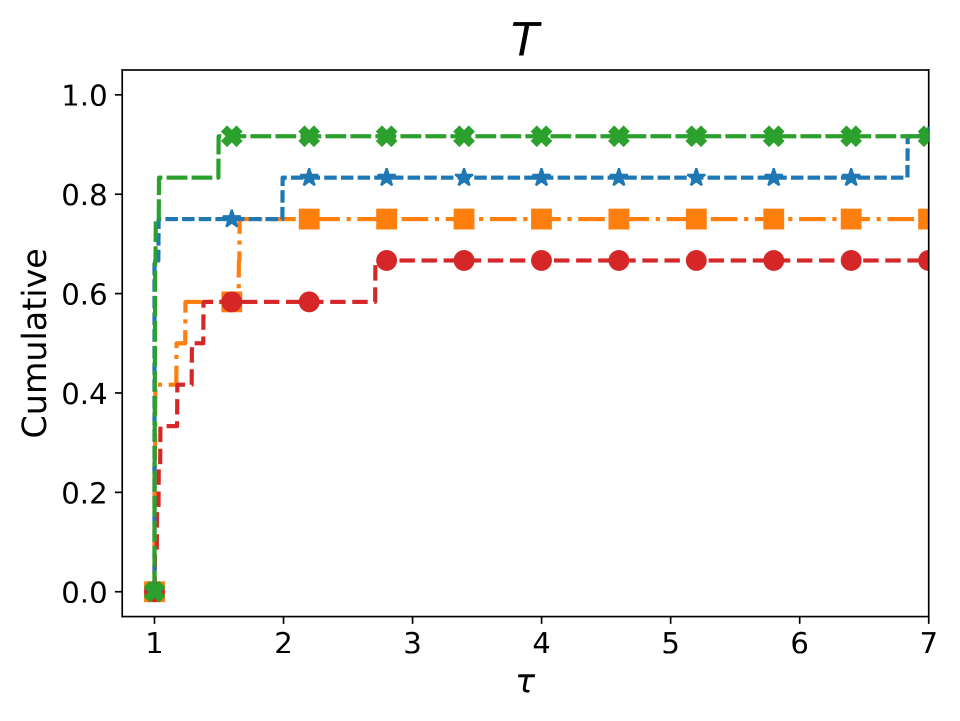}}
    \hfil
    \subfloat[27 problem instances with feasible set $\Omega_{\text{Box}}$.]{\includegraphics[width=0.35\textwidth]{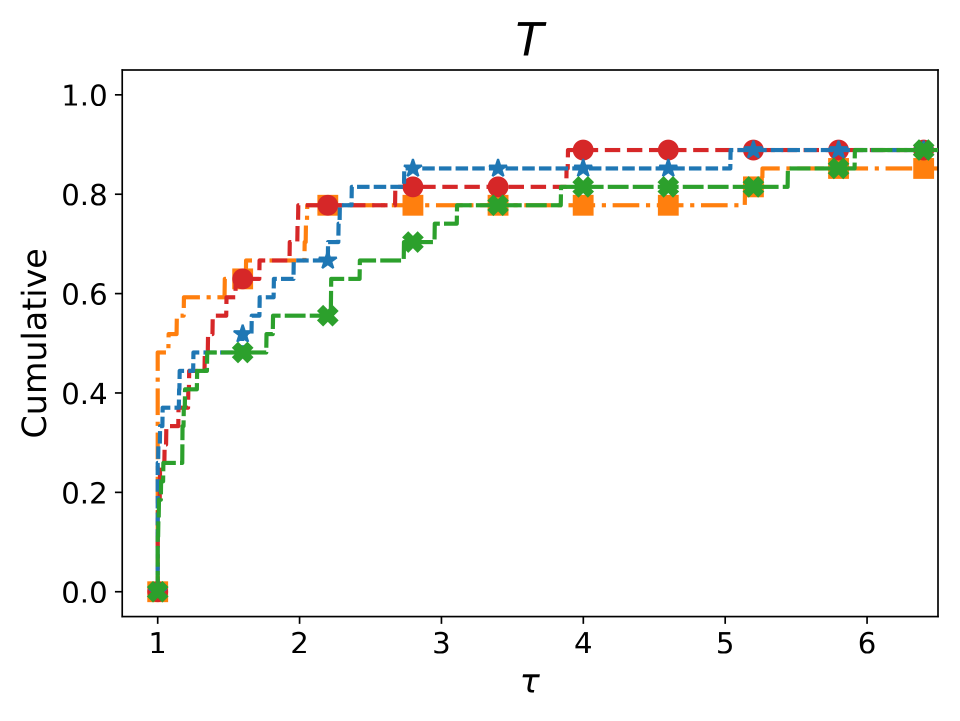}}
    \caption{Performance profiles obtained by \texttt{SCS} and \texttt{SPG} for both the non-monotone version with $M=10$ and the monotone one with respect to $T$ on the problems of Table~\ref{tab::problems} whose solution lies on the boundary of the feasible set. Note that the intervals of the x-axes were set to improve the visualization of the numerical results. For further details on the computation of the performance profiles, we refer the reader to Section~\ref{sec::experiments}.}
    \label{fig::active}
\end{figure}

Finally, it is noteworthy from Figure \ref{fig::all_obj} that the efficiency improvements demonstrated by our methods are related to overall good quality stationary solutions, both across the full problem benchmark and for the subset of problems with solutions on the boundary of the feasible set.

\begin{figure}[!h]
    \centering
    \subfloat[137 problem instances.]{\includegraphics[width=0.35\textwidth]{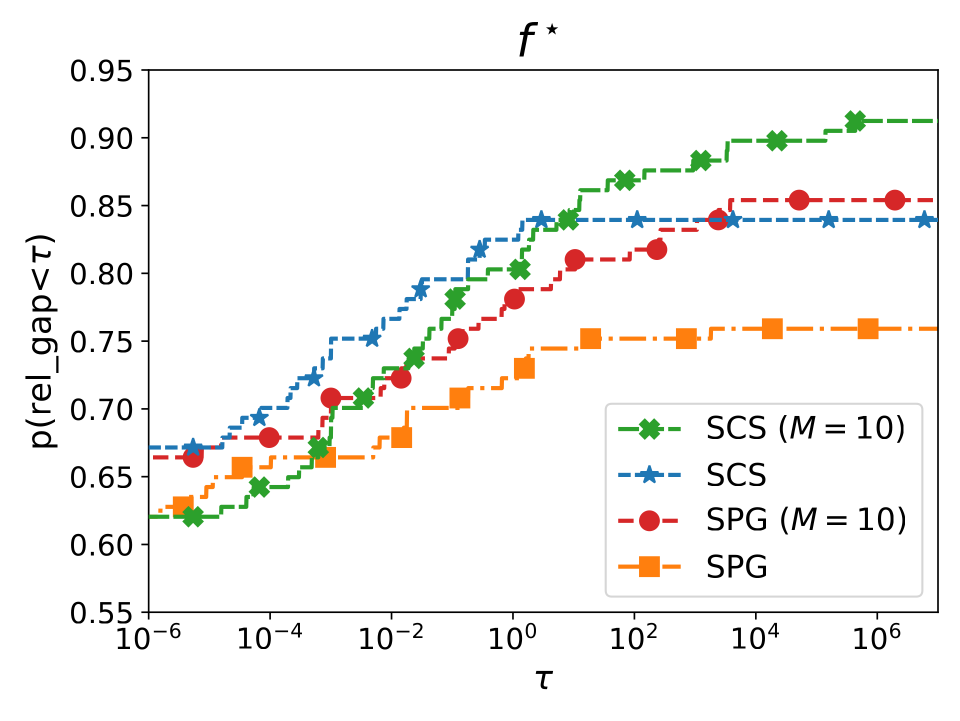}}
    \hfil
    \subfloat[68 problem instances whose solution lies on the boundary of the feasible set.]{\includegraphics[width=0.35\textwidth]{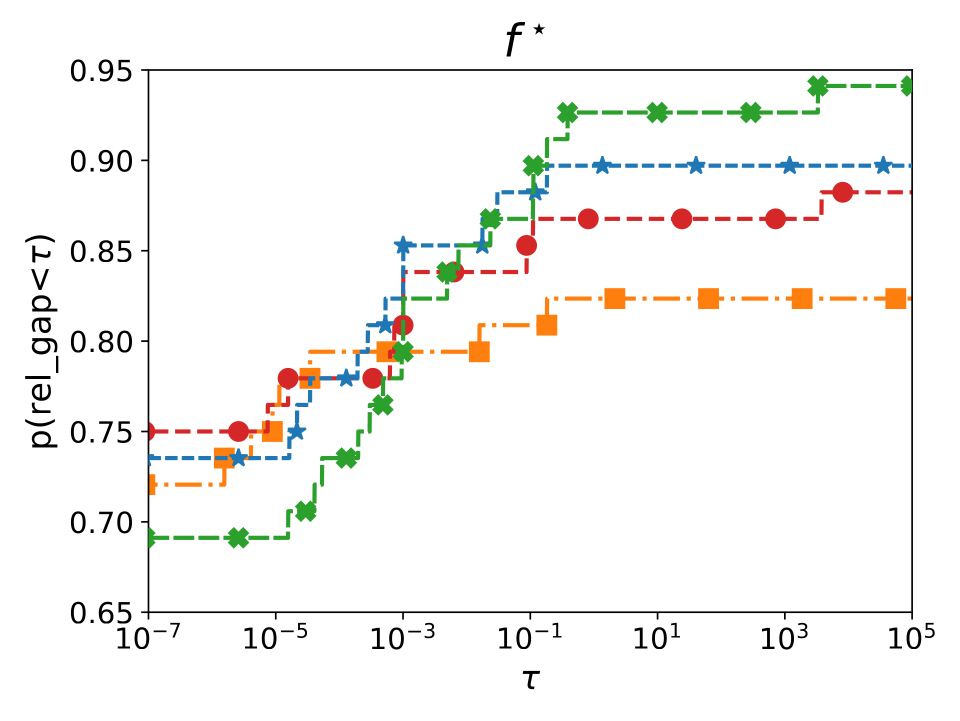}}
    \caption{Performance profiles obtained by \texttt{SCS} and \texttt{SPG} for both the non-monotone version with $M=10$ and the monotone one with respect to $f^\star$ on the problems of Table~\ref{tab::problems} with all the tested constraint types. Note that the intervals of the axes were set to improve the visualization of the numerical results. For further details on the computation of the performance profiles, we refer the reader to Section~\ref{sec::experiments}.}
    \label{fig::all_obj}
\end{figure}

\section{Conclusions}
\label{sec::conclusions}

In this work, we propose a momentum-based method for smooth, convexly constrained optimization problems, for which the Euclidean projection is computationally tractable, extending a curve search globalization method for heavy-ball type algorithms recently introduced in the literature. 

The method combines momentum acceleration with feasibility preservation, automatically reverting to a feasible descent direction when the heavy-ball step is unacceptable. We proved that the algorithm is well-defined and globally convergent to stationary points, despite the challenges introduced by using a heavy-ball type direction in a constrained setting. We also discussed how several practical mechanisms, such as non-monotone curve search, spectral gradient choice of directions defining the curve and an adaptive momentum strategy, can be incorporated into the base algorithm. 

Numerical experiments on a benchmark of constrained problems demonstrate the robustness of the approach and its competitive performance with respect to the state-of-the-art \texttt{SPG} algorithm. Overall, these results suggest that combining curve search globalization with momentum-based strategies provides a promising approach for constrained optimization. Future work may include extending the approach to more general, possibly nonconvex feasible sets.


\section*{Declarations}

\subsection*{Disclosure statement}

The authors report there are no competing interests to declare.

\subsection*{Funding}

No funding was received for conducting this study.

\subsection*{Code availability statement}
The implementation code of the approach presented in the paper can be found at \url{https://github.com/pierlumanzu/constrained_scs}.

\subsection*{Notes on contributors}
\textbf{Federica Donnini} received her bachelor's degree in Mathematics from the University of Florence in 2022. In 2024, she completed a double-degree master's program, obtaining a master's degree in Applied Mathematics from the University of Florence and a master's degree in Mathematical Engineering from Complutense University of Madrid. She is currently a Ph.D. student in the Department of Information Engineering of the University of Florence. Her main research interests include stochastic and robust optimization.
\\\\
\textbf{Pierluigi Mansueto} received his bachelor and master's degree in Computer Engineering at the University of Florence in 2017 and 2020, respectively. He then obtained his PhD degree in Information Engineering from the University of Florence in 2024. Currently, he is a Postdoctoral Research Fellow at the Department of Information Engineering of the University of Florence. His main research interests are multi-objective optimization and global optimization.

\subsection*{ORCID}

Federica Donnini: 0009-0004-0377-2046\\
Pierluigi Mansueto: 0000-0002-1394-0937

\bibliographystyle{tfs}

\begin{thebibliography}{10}
	\providecommand{\MR}{\relax\unskip\space MR }
	\providecommand{\url}[1]{\normalfont{#1}}
	\providecommand{\urlprefix}{Available at }
	
	\bibitem{ben1990curved}
	A. Ben-Tal, A. Melman, and J. Zowe, \emph{Curved search methods for
		unconstrained optimization}, Optimization 21 (1990), pp. 669--695.
	
	\bibitem{bertsekas1999nonlinear}
	D.P. Bertsekas, \emph{Nonlinear programming}, 2nd ed., Athena Scientific,
	Belmont, MA, 1999.
	
	\bibitem{birgin2000nonmonotone}
	E.G. Birgin, J.M. Mart{\'\i}nez, and M. Raydan, \emph{Nonmonotone spectral
		projected gradient methods on convex sets}, SIAM Journal on Optimization 10
	(2000), pp. 1196--1211.
	
	\bibitem{birgin01}
	E.G. Birgin, J.M. Mart\'{\i}nez, and M. Raydan, \emph{Algorithm 813:
		Spg—software for convex-constrained optimization}, ACM Trans. Math. Softw.
	27 (2001), p. 340–349.
	
	\bibitem{botsaris1978differential}
	C.A. Botsaris, \emph{Differential gradient methods}, Journal of Mathematical
	Analysis and Applications 63 (1978), pp. 177--198.
	
	\bibitem{bottou18}
	L. Bottou, F.E. Curtis, and J. Nocedal, \emph{Optimization methods for
		large-scale machine learning}, SIAM Review 60 (2018), pp. 223--311.
	
	\bibitem{Dolan2002}
	E.D. Dolan and J.J. Mor{\'e}, \emph{Benchmarking optimization software with
		performance profiles}, Mathematical Programming 91 (2002), pp. 201--213.
	
	\bibitem{donnini2025efficientglobalizationheavyballtype}
	F. Donnini, M. Lapucci, and P. Mansueto, \emph{Efficient globalization of
		heavy-ball type methods for unconstrained optimization based on curve
		searches} (2025).
	
	\bibitem{fan2023msl}
	C. Fan, S. Vaswani, C. Thrampoulidis, and M. Schmidt, \emph{{MSL}: An Adaptive
		Momentum-based Stochastic Line-search Framework}, in \emph{OPT 2023:
		Optimization for Machine Learning}. 2023.
	
	\bibitem{farin2000essentials}
	G. Farin and D. Hansford, \emph{The essentials of CAGD}, AK Peters/CRC Press,
	2000.
	
	\bibitem{goldfarb1980curvilinear}
	D. Goldfarb, \emph{Curvilinear path steplength algorithms for minimization
		which use directions of negative curvature}, Mathematical programming 18
	(1980), pp. 31--40.
	
	\bibitem{gould2000exploiting}
	N.I. Gould, S. Lucidi, M. Roma, and P.L. Toint, \emph{Exploiting negative
		curvature directions in linesearch methods for unconstrained optimization},
	Optimization methods and software 14 (2000), pp. 75--98.
	
	\bibitem{gould2015cutest}
	N.I. Gould, D. Orban, and P.L. Toint, \emph{{CUTE}st: a constrained and
		unconstrained testing environment with safe threads for mathematical
		optimization}, Computational Optimization and Applications 60 (2015), pp.
	545--557.
	
	\bibitem{grippo2023introduction}
	L. Grippo and M. Sciandrone, \emph{Introduction to methods for nonlinear
		optimization}, Vol. 152, Springer Nature, 2023.
	
	\bibitem{gurobi}
	{Gurobi Optimization, LLC}, \emph{{Gurobi Optimizer Reference Manual}} (2026).
	\urlprefix\url{https://www.gurobi.com}.
	
	\bibitem{lapucci2024globallyconvergentgradientmethod}
	M. Lapucci, G. Liuzzi, S. Lucidi, D. Pucci, and M. Sciandrone, \emph{A globally
		convergent gradient method with momentum}, Computational Optimization and
	Applications 93 (2026), pp. 795--820.
	
	\bibitem{Lee17}
	C.P. Lee, P.W. Wang, W. Chen, and C.J. Lin, \emph{Limited-memory
		Common-directions Method for Distributed Optimization and its Application on
		Empirical Risk Minimization}, in \emph{Proceedings of the 2017 SIAM
		International Conference on Data Mining (SDM)}. SIAM, 2017, pp. 732--740.
	
	\bibitem{lessard2016analysis}
	L. Lessard, B. Recht, and A. Packard, \emph{Analysis and design of optimization
		algorithms via integral quadratic constraints}, SIAM Journal on Optimization
	26 (2016), pp. 57--95.
	
	\bibitem{Liu2024}
	Z. Liu, Y. Ni, H. Liu, and W. Sun, \emph{A new subspace minimization conjugate
		gradient method for unconstrained minimization}, Journal of Optimization
	Theory and Applications 200 (2024), pp. 820--851.
	
	\bibitem{polyak1964some}
	B.T. Polyak, \emph{Some methods of speeding up the convergence of iteration
		methods}, Ussr computational mathematics and mathematical physics 4 (1964),
	pp. 1--17.
	
	\bibitem{polyak1987introduction}
	B.T. Polyak, \emph{Introduction to optimization}, New York, Optimization
	Software, 1987.
	
	\bibitem{Powell1977}
	M.J.D. Powell, \emph{Restart procedures for the conjugate gradient method},
	Mathematical Programming 12 (1977), pp. 241--254.
	
	\bibitem{shi2005new}
	Z.J. Shi and J. Shen, \emph{A new descent algorithm with curve search rule},
	Applied mathematics and computation 161 (2005), pp. 753--768.
	
	\bibitem{Tao22}
	W. Tao, G.W. Wu, and Q. Tao, \emph{Momentum acceleration in the individual
		convergence of nonsmooth convex optimization with constraints}, IEEE
	Transactions on Neural Networks and Learning Systems 33 (2022), pp.
	1107--1118.
	
	\bibitem{wright2022optimization}
	S.J. Wright and B. Recht, \emph{Optimization for data analysis}, Cambridge
	University Press, 2022.
	
	\bibitem{xu2016global}
	Z. Xu, Y. Tang, Z.J. Shi, \emph{et~al.}, \emph{Global convergence of curve
		search methods for unconstrained optimization}, Applied Mathematics 7 (2016),
	p. 721.
	
\end{thebibliography}

\appendix

\section{Supplementary Mathematical Proofs}
\label{app::proof}

In this appendix, we report mathematical proofs which did not find space in the main
body of the manuscript.

\subsection*{Proof of Lemma \ref{lemma::CHgamma}}

\begin{proof}
    First, we note that cases where $\hat{t} = 0$ and/or $t = 0$ trivially follow. Then, we will focus on the case where both values are positive.
    
    By equation \eqref{eq::quadratic_bezier}, we get that $\gamma(\hat{t})=(1-\hat{t})^2P_0+2\hat{t}(1-\hat{t})P_1+\hat{t}^2P_2$, which yields $P_2 = \frac{1}{\hat{t^2}}(\gamma(\hat{t})-(1-\hat{t})^2P_0 - 2\hat{t}(1-\hat{t})P_1)$.
    Replacing now $P_2$ in the generic equation \eqref{eq::quadratic_bezier}, we obtain that
    \begin{equation*}
        \gamma(t)=\left((1-t)^2-\frac{t^2}{\hat{t}^2}(1-\hat{t})^2\right)P_0 + \left(2t(1-t)-2\frac{t^2}{\hat{t}}(1-\hat{t})\right)P_1+\frac{t^2}{\hat{t}^2}\gamma(\hat{t}).
    \end{equation*}
    Recalling the definition of convex hull, i.e.,
    \begin{equation}
        \label{eq::convex_hull}
        \text{CH}(P_0,P_1,\gamma(\hat{t}))=\left\{a_0P_0+a_1P_1+a_2\gamma(\hat{t})\mid \sum_{i=0}^2a_i=1, a_i\geq0,\ \forall i\in\{0,1,2\}\right\},
    \end{equation}
    the thesis immediately follows if 
    \begin{equation*}
        a_0 =  (1-t)^2-\frac{t^2}{\hat t^2}(1-\hat t)^2, \qquad 
        a_1 = 2t(1-t)-2\frac{t^2}{\hat t}(1-\hat t), \qquad
        a_2 =  \frac{t^2}{\hat t^2}
    \end{equation*}
    satisfy the properties indicated in equation \eqref{eq::convex_hull} for $t \in (0, \hat{t}]$.
    
    Observe that $a_0\ge0$ if and only if $f_0(t) \ge f_0(\hat{t})$ with $f_0(h) = \frac{(1-h)^2}{h^2}$. Since $f_0$ is a non-increasing function for all $h\in (0,1]$, $t \le \hat{t}$ and $t, \hat{t} \in (0,1]$, we have that $f_0(t) \ge f_0(\hat{t})$. Following a similar reasoning, it can be shown that $a_1 \ge 0$ for all $t \in (0, \hat{t}]$, as $a_1 \ge 0$ if and only if $f_1(t) \ge f_1(\hat{t})$, where $f_1(h) = \frac{1-h}{h}$ is a non-increasing function on the interval $(0, 1]$. The property $a_2 \ge 0$ holds trivially. Finally, it is easy to verify by direct algebraic manipulation that $a_0 + a_1 + a_2 = 1$, which completes the proof.
\end{proof}

\subsection*{Proof of Lemma \ref{lemma::CHinfeas}}

\begin{proof}
    First, if $\hat{t} = 1$ we trivially get the thesis since, by equations \eqref{eq::quadratic_gamma}-\eqref{eq::quadratic_bezier}, $\gamma(1) = x_k + s_k = P_2$.
    Then, let us consider the case $\hat{t} \in (0, 1)$ and assume, by contradiction, that $g_i(P_2) \le 0$. By the properties of Bézier curves \eqref{eq::quadratic_bezier}, there exist $a_0, a_1, a_2 \ge 0$, with $\sum_{i=0}^2a_i = 1$, such that $\gamma(\hat{t}) = a_0P_0 + a_1P_1 + a_2P_2$. By assumption, we have that $g_i$ is convex and $P_0, P_1 \in \Omega$; thus, it immediately follows that 
    \begin{equation*}
        g_i(\gamma(\hat{t})) = g_i(a_0P_0 + a_1P_1 + a_2P_2) \le a_0g_i(P_0) + a_1g_i(P_1) + a_2g_i(P_2) \le 0,
    \end{equation*}
    which is a contradiction. This completes the proof.
\end{proof}

\subsection*{Proof of Lemma \ref{lemma::reduction_beta}}

\begin{proof}
    By Proposition \ref{prop::feasibility2}, we have that $x_k \in \Omega$ for all $k$. Moreover, since Assumption \ref{ass::dk_feas} holds and $\alpha \in (0, 1)$, we also know that 
    \begin{equation}
        \label{eq::feas_xk}
        g_i(x_k + \alpha d_k) \le 0, \quad \forall i \in [m].
    \end{equation}
    
    We now consider the following partitions of the constraint indices $\tilde{I}_k$ and $[m] \setminus \tilde{I}_k$, one at a time.
    Note that $\Omega = \Omega_{\tilde{I}_k} \cap \Omega_{[m] \setminus \tilde{I}_k}$.
    
    \begin{enumerate}
        \item Since $k > 0$ the execution of line \ref{line::reduce_beta} in Algorithm \ref{alg::lines7-8} implies that the first condition in the if clause of line \ref{line::if-restore-2} is not satisfied: 
        \begin{equation}
            \label{eq::g_xk_sk}
            g_i(x_k + s_k) = g_i(x_k + \alpha d_k  + \beta(x_k - x_{k-1})) \le 0, \quad \forall i \in \tilde{I}_k.    
        \end{equation}
        Let us consider $h \in \mathbb{N}$ and the point $x_k + \alpha d_k  + \delta^h\beta(x_k - x_{k-1})$. Since $g$ is convex, it follows by simple algebraic manipulations that, for all $i \in \tilde{I}_k$,
        \begin{gather*}
            \begin{aligned}
                g_i(x_k + \alpha d_k  + \delta^h\beta(x_k - x_{k-1})) &= g_i((1-\delta^h)(x_k + \alpha d_k) + \delta^h(x_k + \alpha d_k + \beta(x_k - x_{k-1}))) \\&\le (1-\delta^h) g_i(x_k + \alpha d_k) + \delta^hg_i(x_k + \alpha d_k + \beta(x_k - x_{k-1})) \le 0,
            \end{aligned}
        \end{gather*}
        where the last inequality results from equations \eqref{eq::feas_xk}-\eqref{eq::g_xk_sk} and $\delta \in (0, 1)$. Since $h \in \mathbb{N}$ is arbitrary, we thus have that $x_k + \alpha d_k  + \delta^h\beta(x_k - x_{k-1}) \in \Omega_{\tilde{I}_k}$ for all $h \ge 0$.
        
        \item Let us suppose by contradiction that, for all $h \in \mathbb{N}$, there exists $i_h \in [m] \setminus \tilde{I}_k$ such that 
        \begin{equation*}
            g_{i_h}(x_k + \alpha d_k  + \delta^h\beta(x_k - x_{k-1})) > 0.
        \end{equation*}
        Since the set $[m] \setminus \tilde{I}_k$ is finite, we can consider a subsequence $H \subseteq \{0, 1, \ldots\}$ such that, for all $h \in H$, $i_h = \hat{i}$ and 
        \begin{equation*}
            g_{\hat{i}}(x_k + \alpha d_k  + \delta^h\beta(x_k - x_{k-1})) > 0.
        \end{equation*}
        Given that $g$ is continuous, we can take the limit for $h \in H$, $h \to \infty$, to obtain that $g_{\hat{i}}(x_k + \alpha d_k) \ge 0$. 
        
        Since equation \eqref{eq::feas_xk} holds, it thus follows that $g_{\hat{i}}(x_k + \alpha d_k) = 0$. Moreover, given the convexity of $g_{\hat{i}}$, $x_k \in \Omega$ and Assumption \ref{ass::dk_feas}, it is straightforward to see from the previous result that $g_{\hat{i}}(x_k + \tilde{t} d_k) = 0$, with $\tilde{t}$ defined as in Algorithm \ref{alg::CSM_constrained}. Then, it must be that $\hat{i} \in \tilde{I}_k$, which is a contradiction.
        
        Thus, there exists $\bar{h} \in \mathbb{N}$ such that, for all $i \in [m] \setminus \tilde{I}_k$, 
        \begin{equation*}
            g_i(x_k + \alpha d_k  + \delta^{\bar{h}}\beta(x_k - x_{k-1})) \le 0,
        \end{equation*}
        i.e., $x_k + \alpha d_k  + \delta^{\bar{h}}\beta(x_k - x_{k-1}) \in \Omega_{[m] \setminus \tilde{I}_k}$.
    \end{enumerate}
    Considering both cases, we conclude the proof by defining $\beta_k = \delta^{\bar{h}} \beta > 0$.
\end{proof}

\section{Algorithmic Scheme of the Extended Curve Search Method for Constrained Optimization}
\label{app::extended_scheme}

In this appendix, we provide the full algorithmic scheme of Algorithm~\ref{alg::CSM_constrained}, with all the techniques described in Section~\ref{sec::extensions} integrated. The complete scheme is presented in Algorithm~\ref{alg::extended_scheme}.

\begin{algorithm}[ht]\caption{Extended Curve Search method for smooth convexly constrained problems}
    \label{alg::extended_scheme}
	\begin{algorithmic}[1]
		\REQUIRE 
			$x_0\in\Omega$, $\alpha\in(0, 1),\ \beta>0,$ $\tilde{t} \in (0, 1)$, $\delta \in (0, 1)$, $\sigma \in (0, 1)$, $\{\varepsilon_k\} \subseteq \mathbb{R}_+$ a decreasing sequence, $M \in \mathbb{N}$, $\eta_0 > 0$, $\eta_{\max} > \eta_{\min} > 0$
		\STATE Set $k=0$
        \STATE Set $x_{-1}=x_0$
        \STATE Set $m(0) = 0$
        \WHILE{stopping criterion is not satisfied}
        \STATE Compute gradient-related direction $d_k = \Pi_{\Omega}[x_k - \eta_k\nabla f(x_k)] - x_k$
        \STATE Compute direction $s_k = \alpha d_k + \beta\eta_k(x_k - x_{k-1})$
        \STATE Compute set $\tilde{I}_k=\{i\in[m]|g_i(x_k + \tilde{t}d_k) \ge -\varepsilon_k\}$
        \IF{$\exists i\in \tilde{I}_k: g_i(x_k+s_k) > 0$ \textbf{or} $k = 0$}
        \STATE Set $s_k = d_k$ 
        \ELSE
        \IF{$d_k\ne-\eta_k\nabla f(x_k)$}
        \STATE Find $\beta_k = \max_{h \in \mathbb{N}}\{\delta^h\beta \mid x_k + \alpha d_k + \delta^h\beta\eta_k(x_k - x_{k-1}) \in \Omega\}$
        \STATE Set $s_k = \alpha d_k + \beta_k\eta_k(x_k - x_{k-1})$
        \ENDIF
        \ENDIF
        \STATE Let $\gamma_k(t)=x_k + td_k + t^2(s_k - d_k)$.
        \STATE Compute 
        \begin{equation*}
            \quad\quad
            \begin{aligned}
                t_k & = \max_{h \in \mathbb{N}}\left\{\delta^h
                \middle\vert
                \begin{aligned}
                    & \gamma_k(\delta^h) \in \Omega\ \land\\
                    &f(\gamma_k(\delta^h))\le \max_{0\leq j\leq m(k)} f(x_{k-j})+\sigma \delta^h \nabla f(x_k)^\top d_k
                \end{aligned}
                \right\}
            \end{aligned}
        \end{equation*}
        \STATE Set $x_{k+1}=\gamma_k(t_k)$.
        \STATE Set $m(k+1) = \min\{m(k) + 1, M\}$
        \STATE Set $r_k = x_{k+1} - x_k$
        \STATE Set $y_k = \nabla f(x_{k+1}) - \nabla f(x_k)$
        \STATE Set $\eta_{k+1} = \min\{\eta_{\max}, \max\{\eta_{\min}, \frac{r_k^\top r_k}{r_k^\top y_k}\}\}$
        \STATE Set $k = k + 1$
        \ENDWHILE
		\RETURN $x_k$
	\end{algorithmic}
\end{algorithm}

\end{document}